\setlist[enumerate]{label={\arabic*.}}
\newtheorem{theorem}{Theorem}
\newtheorem{proposition}[theorem]{Proposition}
\newtheorem{corollary}[theorem]{Corollary}
\theoremstyle{definition}
\newtheorem{conjecture}[theorem]{Conjecture}
\newtheorem{remark}[theorem]{Remark}
\newtheorem{construction}{Construction}\crefname{construction}{construction}{constructions}
\numberwithin{theorem}{section}
\numberwithin{equation}{section}
\newcommand\br[1]{{\left(#1\right)}}
\newcommand\floor[1]{\left\lfloor{#1}\right\rfloor}
\renewcommand{\tilde}{\widetilde}
\def\P{\mathbf{P}}
\newcommand\opr[1]{\operatorname{#1}}
\def\GL{\opr{GL}}
\def\GammaL{\opr{\Gamma L}} 
\def\PGL{\opr{PGL}}
\def\PSL{\opr{PSL}}
\def\PGammaL{\opr{P\Gamma L}} 
\def\Gal{\opr{Gal}}
\def\Aut{\opr{Aut}}
\def\chr{\opr{char}}
\def\tr{\opr{tr}}
\def\dev{\opr{dev}}
\newcommand{\FF}[0]{\mathbb{F}}
\def\eps{\varepsilon}
\newcommand{\Ell}{\mathcal{L}}
\def\calP{\mathcal{P}}
\def\F{\mathbf{F}}
\def\Z{\mathbf{Z}}
\def\R{\mathbf{R}}
\def\C{\mathbf{C}}
\def\cP{\mathcal{P}}
\def\cO{\mathcal{O}}
\def\frob{\opr{Frob}}
\def\spn{\opr{span}}
\renewcommand{\matrix}[9]{\begin{pmatrix} #1 & #2 & #3 \\ #4 & #5 & #6 \\ #7 & #8 & #9 \end{pmatrix}}
\newcommand{\trigonal}[6]{\matrix%
    {#1}{#4}{#6}%
    {0}{#2}{#5}%
    {0}{0}{#3}}
\newcommand{\unipotent}[3]{\trigonal111{#1}{#2}{#3}}
\newcommand{\diagonal}[3]{\trigonal{#1}{#2}{#3}000}
\begin{document}

\title{The apparent structure of dense Sidon sets}

\author{Sean Eberhard}

\address{Sean Eberhard, Mathematical Sciences Research Centre, Queen's University Belfast, Belfast BT7~1NN, UK}
\email{s.eberhard@qub.ac.uk}

\thanks{SE has received funding from the European Research Council (ERC) under the European Union’s Horizon 2020 research and innovation programme (grant agreement No. 803711) and from the Royal Society.}

\author{Freddie Manners}
\address{Freddie Manners, UCSD Department of Mathematics, 9500 Gilman Drive \#0112, La Jolla CA 92093, USA}
\email{fmanners@ucsd.edu}

\begin{abstract}
    The correspondence between perfect difference sets and transitive projective planes is well-known.
    We observe that all known dense (i.e., close to square-root size) Sidon subsets of abelian groups come from projective planes through a similar construction.
    We classify the Sidon sets arising in this manner from desarguesian planes and find essentially no new examples.
    There are many further examples arising from nondesarguesian planes.

    We conjecture that all dense Sidon sets arise from finite projective planes in this way.
    If true, this implies that all abelian groups of most orders do not have dense Sidon subsets.
    In particular if $\sigma_n$ denotes the size of the largest Sidon subset of $\Z/n\Z$, this implies $\liminf_{n \to \infty} \sigma_n / n^{1/2} < 1$.

    We also give a brief bestiary of somewhat smaller Sidon sets with a variety of algebraic origins, and for some of them provide an overarching pattern.
\end{abstract}

\maketitle

\setcounter{tocdepth}{1}
\tableofcontents

\section{Dense Sidon sets}%
\label{sec:intro}

Let $G$ be an abelian group.
A \emph{Sidon set} (or \emph{$B_2$ set}) is a subset $S \subset G$ such that
\[ 
    x + y = z + w \implies \{x, y\} = \{z, w\} \qquad (x, y, z, w \in S).
\]
We call a solution $(x, y, z, w)$ to $x + y = z + w$ an \emph{additive quadruple},
and a \emph{trivial additive quadruple} if $\{x, y\} = \{z, w\}$, so a Sidon set is a set all of whose additive quadruples are trivial.
We call $S$ a \emph{perfect difference set} if moreover $S - S = G$.

Sidon sets are interesting in additive combinatorics for being extremely unstructured: they have maximum doubling constant and minimum additive energy (see~\cite{tao--vu}*{Chapter~2} for the definitions of these terms).
It is curious therefore that all known Sidon sets which are nearly as large as possible appear to be rather structured in some other way (while for instance Sidon sets constructed randomly or greedily have much smaller size).
As Ruzsa put it, ``somehow all known constructions of dense Sidon sets involve the primes'' (\cite{ruzsa-survey}*{Section~11}).

Let $S$ be a Sidon set in an abelian group $G$ of order $n$.
Since the differences $x-y$ with $x \neq y$ are all distinct and nontrivial, we must have $|S|(|S|-1) \leq n-1$, or $|S|^2 - |S| + 1 \leq n$.
Call $S$ \emph{dense} if $|S| \geq (1 - o(1)) n^{1/2}$.
The following are the best-known examples of dense Sidon sets.

\begin{construction}[Erd\H{o}s--Tur\'an~\cite{erdos--turan}]%
    \label{csn:1}
    We give this example first, slightly out of chronological order, because it is the simplest to describe and understand.
    Let $K$ be a finite field.
    Assume $\chr K \neq 2$.
    Let $G = K^2$.
    Let $S$ be the parabola
    \[
        S = \{(x, x^2) : x \in K\}.
    \]
    
    It is a simple exercise to check the Sidon property.
    Suppose
    \[
        (x, x^2), (y, y^2), (z, z^2), (w, w^2)
    \]
    form an additive quadruple.
    Then
    \begin{align}
        x + y &= z + w \\
        x^2 + y^2 &= z^2 + w^2.
    \end{align}
    Hence  
    \[
        2xy = (x + y)^2 - (x^2 + y^2) = (z + w)^2 - (z^2 + w^2) = 2zw.
    \]
    Since $\chr K \neq 2$, $xy = zw$.
    Hence the polynomials $(t - x)(t-y)$ and $(t - z)(t - w)$ are equal, so $\{x, y\} = \{z, w\}$.
    
    \emph{Parameters:} $|G| = q^2$ and $|S| = q$, where $q = |K|$.
\end{construction}

\begin{construction}[Singer~\cite{singer}]%
    \label{csn:2}
    Let $K$ be a finite field and let $L$ be an extension of $K$ of degree $3$.
    Let $G = L^\times / K^\times$.
    Let $H$ be a $K$-plane in $L$, say
    \[
        H = \{x \in L : \tr x = 0\}.
    \]
    Let $S = (H \cap L^\times) / K^\times$.
    
    To check the Sidon property, suppose $x, y, z, w \in S$ form an additive quadruple.
    Let $\tilde x, \tilde y, \tilde z, \tilde w$ be lifts to $L^\times$.
    Then $\tilde x \tilde y \tilde z^{-1} \tilde w^{-1} \in K^\times$.
    Let
    \[
        H' = \tilde x \tilde z^{-1} H = \tilde w \tilde y^{-1} H.
    \]
    Note that $H$ and $H'$ both contain both $\tilde x$ and $\tilde w$.
    Hence either $H = H'$ or $\tilde x / \tilde w \in K^\times$.
    In other words, either $x = z$ or $x = w$, as required.
    
    \emph{Parameters:} $|G| = q^2 + q + 1$ and $|S| = q + 1$, where $q = |K|$ ($S$ is a perfect difference set).
\end{construction}

\begin{construction}[Bose~\cite{bose}]%
    \label{csn:3}
    Let $K$ be a finite field and let $L$ be an extension of $K$ of degree $2$.
    Let $G = L^\times$.
    Let $H$ be a $K$-line in $L$, let $u \in L \setminus H$, and let $S = u + H$.
    
    The verification of the Sidon property is much as in \Cref{csn:2}.
    
    \emph{Parameters:} $|G| = q^2 - 1$ and $|S| = q$, where $q = |K|$.
\end{construction}

\begin{construction}[Spence: see Ganley~\cite{ganley}, Ruzsa~\cite{ruzsa}*{Theorem~4.4}]%
    \label{csn:4}
    Let $K$ be a finite field, let $G = K^\times \times K$, and let $S = \{(x, x) : x \in K^\times\}$.
    
    If $(x,x), (y, y), (z, z), (w, w)$ form an additive quadruple then $x + y = z + w$ and $xy = zw$,
    so $\{x, y\} = \{z, w\}$, as in \Cref{csn:1}.
    
    \emph{Parameters:} $|G| = q(q-1)$ and $|S| = q - 1$, where $q = |K|$.
\end{construction}

\begin{construction}[Hughes~\cite{hughes}, Cilleruelo~\cite{cilleruelo}*{Example~3}]%
    \label{csn:5}
    Let $K$ be a finite field, let $G = K^\times \times K^\times$, and let $S = \{(x, y) : x, y \neq 0, x + y = 1\}$.
    
    Suppose $(x, 1-x), (y, 1-y), (z, 1-z), (w, 1-w)$ form an additive quadruple.
    Then
    \begin{align}
        xy &= zw \\
        (1-x)(1-y) &= (1-z)(1-w).
    \end{align}
    We deduce $x + y = z + w$, and it follows that $\{x, y\} = \{z, w\}$ as in \Cref{csn:1,csn:4}.
    
    \emph{Parameters:} $|G| = (q - 1)^2$ and $|S| = q-2$, where $q = |K|$.
\end{construction}

In the literature there is particular emphasis on Sidon sets in cyclic groups, since those may be used to define Sidon sets in $\Z$.
The groups in \Cref{csn:2,csn:3} are cyclic, the groups in \Cref{csn:1,csn:5} are not, and the group in \Cref{csn:4} is cyclic if and only if $q$ is prime.
In this paper we are equally interested in all abelian groups.

Our first main observation is  that the five constructions presented above are not as varied as they appear.
In fact there is a correspondence with the largest abelian subgroups of $\PGL_3(K)$.
The correspondence associates to each abelian subgroup $G \leq \PGL_3(K)$ the Sidon set given as the point-line stabilizer
\[
    S = \{ g \in G: p^g \in \ell \},
\]
for some point $p$ and line $\ell$ in the projective plane $\P^2(K)$ such that the stabilizers $G_p$ and $G_\ell$ are trivial.
There are essentially no further examples in this correspondence.
All this is covered in Section~\ref{sec:correspondence} and Section~\ref{sec:desarg}.

On the other hand, the correspondence is valid for arbitrary finite projective planes, desarguesian\footnote{A projective plane is called \emph{desarguesian} if it satisfies Desargues's theorem. Desarguesian finite projective planes are exactly those of the form $\P^2(K)$ for some finite field $K$.} or not.
The plane should have a large abelian group of automorphisms, playing the role of $G \le \PGL_3(K)$ in the desarguesian case, which significantly constrains the projective planes to be considered.
Nevertheless, many further examples arise in this way; see Section~\ref{sec:nondesarg}.


Conversely, the correspondence also shows that any dense Sidon set gives rise to an object that is ``almost'' a projective plane.
We cannot show, but it is natural to conjecture, that these objects are always true projective planes with some points and lines missing, meaning that all dense Sidon sets are obtained from projective planes.
We will state some precise conjectures of this form in Section~\ref{sec:conjectures}.

Assuming this conjecture, known results on projective planes having large abelian automorphism groups (specifically, the Dembowski--Piper classification) limits which abelian groups $G$ could possibly admit dense Sidon sets $S \subseteq G$.
In particular, the conjecture would imply that
$
    \liminf_{n \to \infty} \max \bigl\{ |S| \colon S \subseteq \Z/n\Z~\text{Sidon} \bigr\} / n^{1/2} < 1
$.

Even Sidon sets which are significantly smaller than $(1-o(1)) n^{1/2}$, e.g., by a constant factor or a power of $\log n$, seem to be rather structured, although the situation is less rigid.
In Section~\ref{sec:smaller} we gather some apparently varied existing constructions, and show---similarly to the dense case above---that many of them fit a common pattern. 
We also use this general pattern to generate new examples.

\subsection{Notation}
We adopt the group-theoretic conventions that, for a group $G$ acting on a set $X$ and $g \in G$, $x \in X$, $x^g$ denotes the action of $g$ on $x$ and $G_x$ denotes the stabilizer of $x$ in $G$.
We will also use standard big $O$ and little $o$ notation occasionally (as we have done already), as well as the Vinogradov notation $X \ll Y$ to mean $X = O(Y)$.

\section{The correspondence}%
\label{sec:correspondence}

An \emph{incidence structure} $\Ell$ is abstractly just a triple $(P, L, I)$ of sets such that $I \subset P \times L$ (this is also the abstract definition of a bipartite graph).
Conventionally we call the elements of $P$ \emph{points}, the elements of $L$ \emph{lines}, and the elements of $I$ \emph{incidences}.
We say that $p \in P$ and $\ell \in L$ are \emph{incident} if $(p, \ell) \in I$, and we may write $p \in \ell$.
We freely use further geometric language: we say $\ell$ and $\ell'$ intersect if there is a point $p$ incident to both of them, we say $p$ and $p'$ are joined by a line if there is a line incident to both of them, etc.
The following definitions, of increasing specialization, are standard.
\begin{enumerate}
    \item An incidence structure $\Ell$ is a \emph{partial linear space} if any two distinct points are incident with at most one line.
    (Equivalently, the bipartite graph defined by $I \subset P \times L$ contains no $C_4$.)
    \item A partial linear space $\Ell$ is
    \begin{enumerate}[(a)]
        \item a \emph{linear space} if any two points are joined by a line,
        \item a \emph{dual linear space}\footnote{sometimes a \emph{semiplane} or a \emph{partial projective plane}} if any two lines intersect.
    \end{enumerate}
    \item A partial linear space which is both a linear space and a dual linear space is a \emph{projective plane}.
\end{enumerate}


A \emph{collineation} or \emph{morphism} $\phi$ between incidence structures $\Ell = (P, L, I)$ and $\Ell' = (P', L', I')$ is a pair of maps $P\to P'$ and $L \to L'$ (both denoted $\phi$) such that
\[
    p \in \ell \implies p^\phi \in \ell^\phi.
\]
As usual, an \emph{isomorphism} is a morphism with an inverse morphism.
We write $\Aut \Ell$ for the group of automorphisms of $\Ell$.

Now let $G$ be an abelian group.
The following proposition articulates a basic equivalence between
Sidon sets $S \subset G$ and partial linear spaces $\Ell$ with a regular $G$-action.

\begin{proposition}%
    \label{prop:correspondence}
    Suppose $\Ell$ is a partial linear space and $G$ is an abelian subgroup of $\Aut \Ell$ such that the action of $G$ is regular on both points and lines.
    Then for any point $p$ and line $\ell$, the set
    \[
        S = \{ g \in G : p^g \in \ell \}
    \]
    is a Sidon set in $G$.
    
    Conversely, suppose $G$ is an abelian group and $S \subset G$.
    The \emph{development} $\dev(S)$ of $S$ is the incidence structure $(P, L, I)$ with $P = L = G$ and
    \[
        I = \{(p, \ell) \in G^2 : p - \ell \in S\}.
    \]
    The incidence structure $\dev(S)$ is a partial linear space if (and only if) $S$ is a Sidon set.
    Every point is contained in $|S|$ lines and every line contains $|S|$ points,
    and $G$ acts regularly on both points and lines.
\end{proposition}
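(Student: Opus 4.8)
The plan is to prove the two directions of \Cref{prop:correspondence} by direct translation between the Sidon condition (no nontrivial additive quadruples) and the partial-linear-space condition (no $C_4$ in the incidence bipartite graph). First I would set up the key dictionary: in $\dev(S)$, a point $p$ and line $\ell$ are incident exactly when $p - \ell \in S$, so a $C_4$ in the incidence graph is a choice of two distinct points $p, p'$ and two distinct lines $\ell, \ell'$ with all four incidences holding, i.e. $p - \ell, p - \ell', p' - \ell, p' - \ell' \in S$. Writing $a = p-\ell$, $b = p-\ell'$, $c = p'-\ell$, $d = p'-\ell'$, these four elements of $S$ satisfy $a - b = (p-\ell) - (p-\ell') = \ell' - \ell = (p'-\ell) - (p'-\ell') = c - d$, hence $a + d = b + c$; so $(a, d, b, c)$ is an additive quadruple in $S$. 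It is trivial ($\{a,d\} = \{b,c\}$) iff $a = b$ or $a = c$, i.e. iff $\ell = \ell'$ or $p = p'$. Thus $\dev(S)$ contains a genuine $C_4$ (distinct points, distinct lines) precisely when $S$ admits a nontrivial additive quadruple. This gives ``$\dev(S)$ is a partial linear space $\iff$ $S$ is Sidon''; I should double-check the edge cases where $a,b,c,d$ are not required to be distinct as elements of $G$ but the points/lines are, to make sure the equivalence is exactly biconditional and not merely one-directional.

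Next I would handle the remaining easy assertions about $\dev(S)$. Each point $p$ lies on the lines $\ell = p - s$ for $s \in S$, which are $|S|$ distinct lines; dually each line $\ell$ contains the $|S|$ points $\ell + s$. The group $G$ acts on $\dev(S)$ by translation, $h \colon p \mapsto p + h$ on points and $\ell \mapsto \ell + h$ on lines; this preserves incidence since $(p+h) - (\ell + h) = p - \ell$, and it is clearly simply transitive (regular) on points and on lines separately. (Here the fact that $G$ is abelian is what makes translation a collineation and what will matter for the converse direction too.)

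For the first (forward) direction, suppose $\Ell$ is a partial linear space with $G \le \Aut\Ell$ abelian and regular on points and on lines. Fix $p, \ell$ and set $S = \{g \in G : p^g \in \ell\}$. Since $G$ is regular on points, identify $P$ with $G$ via $g \mapsto p^g$, and similarly identify $L$ with $G$ via $h \mapsto \ell^h$ using regularity on lines; under these identifications a group element $g$ acts on $P \cong G$ and $L \cong G$ by left translation (using that $G$ is abelian, so that $p^{g} $ corresponds to $g$ and $p^{g h}=(p^g)^h$ corresponds to $g h = h g$). The incidence $p^g \in \ell^h$ then becomes a condition depending only on $g - h$: precisely $p^g \in \ell^h \iff (p^g)^{h^{-1}} \in \ell \iff p^{g h^{-1}} \in \ell \iff g h^{-1} \in S$, i.e. (writing additively) $g - h \in S$. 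Hence $\Ell \cong \dev(S)$ as incidence structures, and since $\Ell$ is a partial linear space, the $C_4$-free characterization above forces $S$ to be Sidon. Alternatively, and perhaps cleaner to write, I would argue directly: if $a, d, b, c \in S$ with $a + d = b + c$ nontrivially, unwind to produce two distinct points both incident to two distinct lines, contradicting the partial-linear-space hypothesis.

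The only genuine subtlety—hence the step I would be most careful about—is the bookkeeping in the forward direction: making sure the two regular actions (on points and on lines) are compatibly coordinatized so that incidence really does become a difference condition $g - h \in S$, and checking that the translation action on $\dev(S)$ is by collineations requires commutativity of $G$. Everything else is a routine unwinding of definitions. I would present the converse direction first (it is self-contained and fixes the dictionary), then the forward direction as essentially its mirror image.
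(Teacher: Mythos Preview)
Your proposal is correct and matches the paper's approach: both directions amount to the same dictionary between a $C_4$ in the incidence graph and a nontrivial additive quadruple in $S$, and your computations are the same ones the paper carries out. The only cosmetic difference is that the paper argues the forward direction directly (from an additive quadruple in $S$ to two points on two lines) rather than first establishing $\Ell \cong \dev(S)$, but you note that alternative as well.
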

\begin{proof}
    For the first part, suppose $x, y, z, w \in S$ and $x z^{-1} = w y^{-1}$.
    Let
    \[
        q = p^{xz^{-1}} = p^{w y^{-1}}.
    \]
    Then
    \[
        p, q \in \ell^{z^{-1}}, \ell^{y^{-1}}.
    \]
    Since $\Ell$ is a partial linear space, this implies $p = q$ or $\ell^{z^{-1}} = \ell^{y^{-1}}$.
    Since the action is regular, this implies $x = z$ or $y = z$.
    Hence $S$ is a Sidon set, as claimed.
    
    For the second part, let $\Ell = \dev(S)$, and suppose $p_1, p_2 \in \ell_1, \ell_2$.
    Then $p_1 - \ell_1, p_1 - \ell_2, p_2 - \ell_1, p_2 - \ell_2 \in S$.
    Since
    \[
    (p_1 - \ell_1) + (p_2 - \ell_2) = (p_1 - \ell_2) + (p_2 - \ell_1),
    \]
    the Sidon condition implies that $p_1 = p_2$ or $\ell_1 = \ell_2$.
    Hence $\Ell$ is a partial linear space.
    The further claims are clear.
\end{proof}

\begin{remark}
    The dual $\Ell^*$ of an incidence structure $\Ell$ is the incidence structure $(L, P, I^*)$, where $I^* = \{(\ell, p) : (p, \ell) \in I\}$.
    An incidence structure $\Ell$ is \emph{self-dual} if $\Ell \cong \Ell^*$.
    The development $\dev(S)$ of a set $S \subset G$ is always self-dual: the dual incidence set is $\dev(-S)$, and the maps $P \to L^\ast$, $x \mapsto -x$ and $L \to P^\ast$, $x \mapsto -x$ define an isomorphism.
\end{remark}

\begin{remark}
    \Cref{prop:correspondence} is well-established in the design theory literature in the extreme case of perfect difference sets and projective planes,
    but less well-known in the general case.
    The reason is more cultural than mathematical: design theorists are not interested in Sidon sets beyond the cases of difference sets and relative difference sets,
    while additive-combinatorialists are interested in quite sparse Sidon sets (anything above cube-root density, usually in $\{1, \dots, n\}$) and the development $\dev(S)$ is less interesting in that case.
\end{remark}

Recall that for any projective plane $\calP$ there is a positive integer $q$, called the \emph{order} of $\calP$, such that there are $q^2 + q + 1$ points, $q^2 + q + 1$ lines, and the incidence graph is $(q+1)$-regular.

\begin{corollary}%
    \label{cor:restricted-projective-planes}
    Let $\mathcal{P}$ be a projective plane of order $q$, and let $G$ be an abelian subgroup of $\Aut\mathcal{P}$.
    Let $p$ be a point and $\ell$ a line such that $G_p = G_\ell = 1$,
    and suppose $\ell$ contains $d$ points of $P \setminus Gp$.
    Then $S = \{g \in G: p^g \in \ell\}$
    is a Sidon set of size $q+1-d$,
    and
    \begin{equation}
        \label{eq:dbound}
        d \leq (q+1) \br{ \frac{q^2+q+1}{|G|} - 1 }.
    \end{equation}
\end{corollary}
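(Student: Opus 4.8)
The plan is to deduce the Sidon property from \Cref{prop:correspondence} applied to a suitable substructure, to read off $|S|$ directly, and to establish \eqref{eq:dbound} by a double count of incidences between the points lying outside the orbit $Gp$ and the lines lying in the orbit $G\ell$.

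First I would introduce the induced substructure $\Ell$ of $\mathcal{P}$ whose points are $Gp$, whose lines are $G\ell$, and whose incidences are inherited from $\mathcal{P}$. Deleting points and lines from a partial linear space again gives a partial linear space, so $\Ell$ is one; moreover every $g \in G$ maps the orbit $Gp$ to itself and the orbit $G\ell$ to itself while preserving incidence, so $G$ acts on $\Ell$. The hypotheses $G_p = G_\ell = 1$ say, by orbit--stabilizer, precisely that this action is regular on $Gp$ and on $G\ell$ (in particular faithful, so $G \le \Aut\Ell$), and \Cref{prop:correspondence} then shows that $S = \{g \in G : p^g \in \ell\}$ is a Sidon set in $G$ (incidence in $\Ell$ and in $\mathcal{P}$ agree on the relevant points and lines, so this is the same $S$). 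For the size, since $G_p = 1$ the map $g \mapsto p^g$ is a bijection from $G$ onto $Gp$, so $|S| = |Gp \cap \ell| = |\ell| - d = (q+1) - d$, using that a line in a plane of order $q$ has $q+1$ points and that $d$ of the points of $\ell$ lie outside $Gp$ by hypothesis. (If one prefers, \Cref{prop:correspondence} can be avoided entirely: its proof uses only $G_p = G_\ell = 1$ rather than full regularity, so it re-runs here verbatim.)

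For the inequality I would count the incident pairs $(r, m)$ with $r \in P \setminus Gp$, $m \in G\ell$, and $r \in m$, in two ways. Each $g \in G$ permutes $Gp$ and hence permutes its complement $P \setminus Gp$; therefore $\ell^g$ meets $P \setminus Gp$ in exactly $|\ell \cap (P \setminus Gp)| = d$ points, and since $G_\ell = 1$ the orbit $G\ell$ consists of $|G|$ distinct lines, so summing over lines gives $|G|\,d$ pairs. On the other hand every point of $\mathcal{P}$ lies on exactly $q+1$ lines, so each $r \in P \setminus Gp$ is incident to at most $q+1$ lines of $G\ell$; since $|Gp| = |G|$ we have $|P \setminus Gp| = (q^2+q+1) - |G|$, so summing over points bounds the number of pairs by $(q+1)\bigl((q^2+q+1) - |G|\bigr)$. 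Comparing the two counts yields $|G|\,d \le (q+1)\bigl((q^2+q+1) - |G|\bigr)$, which rearranges to \eqref{eq:dbound}.

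I do not expect a serious obstacle. The one point that needs care is that $G$ need not act regularly on all of $\mathcal{P}$, so one cannot quote \Cref{prop:correspondence} for $\mathcal{P}$ itself and must first restrict to the substructure carried by $Gp$ and $G\ell$; everything else is the routine bookkeeping that every translate of $\ell$ meets $P \setminus Gp$ in $d$ points and that the orbits $Gp$ and $G\ell$ each have size $|G|$.
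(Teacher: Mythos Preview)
Your proposal is correct and follows essentially the same route as the paper: restrict to the substructure on the orbits $Gp$ and $G\ell$ to invoke \Cref{prop:correspondence}, read off $|S|=q+1-d$, and double-count incidences between $P\setminus Gp$ and $G\ell$ to obtain $d|G|\le (q+1)(q^2+q+1-|G|)$. You even supply the justification (that $G$ permutes $P\setminus Gp$, so every line in $G\ell$ meets it in exactly $d$ points) that the paper leaves implicit.
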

\begin{proof}
    A partial linear space with a regular $G$-action is obtained by restricting to the orbits of $p$ and $\ell$,
    so the fact that $S$ is a Sidon set follows from \Cref{prop:correspondence}.
    Since $\ell$ contains $q+1-d$ points of $Gp$ and the action of $G$ on $Gp$ is regular, it is clear that $|S| = q + 1 - d$.
    We must prove the bound on $d$.
    
    Consider the bipartite incidence graph between $P \setminus Gp$ and $G\ell$.
    The degree of each vertex in $G\ell$ is $d$, while the degree of each vertex in $P \setminus Gp$ is at most $q+1$, so by counting edges we have
    \[
        d|G| \leq (q+1) (q^2 + q + 1 - |G|).
    \]
    Rearranging gives~\eqref{eq:dbound}.
\end{proof}

Note that \Cref{cor:restricted-projective-planes} guarantees a dense Sidon set if and only if $|G| = (1 - o(1))( q^2 + q + 1)$.
In other words, almost all the points of $\calP$ must be in a single $G$-orbit and the same for the lines.
We will see that this is a harsh restriction.

\section{Desarguesian constructions}%
\label{sec:desarg}

The desarguesian projective plane $\P^2(K)$ over the finite field $K$ is defined by taking the points and lines to be the lines and planes, respectively, in the three-dimensional vector space $K^3$, with incidence defined naturally.
In this section we establish that the five constructions of Sidon sets listed in the introduction
arise from \Cref{cor:restricted-projective-planes} applied to $\P^2(K)$ and the maximal abelian subgroups of $\PGL_3(K)$.%
\footnote{It was previously observed by Tait and Timmons~\cite{tait-timmons} that the Cayley graph of the Bose Sidon set (Construction~\ref{csn:3}) is a large subset of an ``orthogonal polarity graph'', i.e., $\P^2(K)$ with the points and lines identified by a self-duality. This is essentially a special case of this general correspondence.  In unpublished work, Timmons made similar observations about the Erd\H{o}s--Tur\'an and Spence examples (Constructions~\ref{csn:1} and~\ref{csn:4}) (Michael Tait, personal communication).}
No further examples arise in this way (apart from a variant of \Cref{csn:1} in even characteristic).

\begin{proposition}%
    \label{prop:maximal-abelian}
    Let $K$ be the finite field $\F_q$.
    Let $Z \cong K^\times$ be the center of $\GL_3(K)$.
    The maximal abelian subgroups of $\PGL_3(K) = \GL_3(K) / Z$, are, up to conjugacy:
    \begin{enumerate}[label=(\roman*)]
        \item
        identifying $K^3$ with the field $L = \F_{q^3}$, the cyclic group $L^\times / K^\times$;
        \item
        identifying $K^3$ with $L \times K$ where $L = \F_{q^2}$, the group $L^\times K^\times / K^\times$;
        \item the group $(K^\times)^3 / Z \cong (K^\times)^2$ of diagonal matrices mod $Z$;
        \item
        \[
            \left\{ \trigonal{r}{r}{1}{a}00 : r \in K^\times,\, a \in K \right\} Z/Z \cong K^\times \times K;
        \]
        \item
        \[
            \left\{ \unipotent{a}{a}{b} : a,b \in K \right\} Z/Z \cong
            \begin{cases}
                K^2 &: q~\textup{odd},\\
                C_4^d &: q = 2^d;
            \end{cases}
        \]
        \item
        \[
            \left\{ \unipotent{0}{a}{b} : a,b \in K \right\} Z/Z \cong K^2;
        \]
        \item
        \[
            \left\{ \unipotent{a}{0}{b} : a,b \in K \right\} Z/Z \cong K^2,
        \]
        \item (only if $q \equiv 1 \pmod 3$) the group $\langle g, h\rangle Z/Z \cong C_3 \times C_3$, where
        \[
            g = \diagonal{1}{\omega}{\omega^2},
            \qquad
            h = \matrix%
            001%
            100%
            010
            ,
        \]
        where $\omega \in K$ is a primitive cube root of unity;
        \item (only if $q \equiv 1 \pmod 3$) identifying $K^3$ with $L = \F_{q^3}$, the group
        \[
            \langle(K^\times)^{1/3}, \frob_q\rangle K^\times / K^\times \cong C_3 \times C_3,
        \]
        where $\frob_q$ is the $x \mapsto x^q$ automorphism of $L / K$.
    \end{enumerate}
\end{proposition}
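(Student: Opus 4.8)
The plan is to classify maximal abelian subgroups of $\GL_3(K)$ up to conjugacy and reduce mod the center $Z$, using the rational canonical form / theory of commuting matrices over a finite field. The starting point is that an abelian subgroup $A \le \GL_3(K)$ is contained in the centralizer $C_{\GL_3(K)}(g)$ of any of its elements, so a maximal abelian $A$ is the maximal abelian subgroup of $C_{\GL_3(K)}(g)$ for a suitably generic $g \in A$; in particular it suffices to understand, for each possible rational canonical form of a single matrix $g$, what its centralizer looks like, and then recurse inside that centralizer. I would organize the case analysis by the factorization type of the characteristic polynomial of a generic element, or equivalently by the isomorphism type of $K^3$ as a module over the (commutative) subalgebra $K[A] \subseteq M_3(K)$ generated by $A$.

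First I would treat the \emph{semisimple} (diagonalizable over $\bar K$) cases. If some element of $A$ has three distinct eigenvalues in $K$, then $A$ lies in the diagonal torus and $A = (K^\times)^3$, giving case (iii). If the best one can do is an eigenvalue in $K$ and a conjugate pair in $\F_{q^2}$, then $K^3 \cong \F_{q^2} \times K$ as a $K[A]$-module and the full centralizer is $\F_{q^2}^\times \times K^\times$, giving case (ii). If there is an element acting irreducibly, $K^3 \cong \F_{q^3}$ and the centralizer is $\F_{q^3}^\times$, which is already abelian, giving case (i). (One must check these tori are genuinely maximal abelian, i.e.\ self-centralizing in $\GL_3$; this is the standard fact that a maximal torus is self-centralizing, and is a short computation with the canonical forms.) The three exceptional cases (viii) and (ix) when $q \equiv 1 \pmod 3$ arise because when $K$ contains a cube root of unity one can adjoin a further order-$3$ element \emph{normalizing but not centralizing} such a torus while still generating an abelian group mod $Z$ — concretely a cyclic permutation of the diagonal coordinates in the split case, or the Frobenius of $\F_{q^3}/K$ together with cube roots in the nonsplit case. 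Here the key point is that conjugation by $h$ scales the torus element $g$ by a cube root of unity, which is a scalar, hence acts trivially in $\PGL_3$; one checks these really are maximal (nothing larger commutes with both generators mod $Z$) and that no further analogue exists, using that $|C_3 \times C_3| = 9$ while a torus has size $q-1$ or $q^3-1$, so for large $q$ these are a genuinely different, non-maximal-torus family.

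Next I would handle the cases where a generic element of $A$ is \emph{not} semisimple, i.e.\ has a repeated eigenvalue with a nontrivial Jordan block. After twisting by a scalar we may assume the repeated eigenvalue is $1$; the possible Jordan types are a single $3\times 3$ block, or a $2\times 2$ block plus a $1\times 1$ block. In the $3\times 3$ regular unipotent case, $K^3 \cong K[t]/(t^3)$ as a module and the centralizer is the abelian group of upper-triangular Toeplitz matrices $\unipotent{a}{a}{b}$, giving case (v); the curious dichotomy $K^2$ versus $C_4^d$ when $q=2^d$ comes from computing the group structure of $\{\unipotent{a}{a}{b}\}/(Z\cap\cdots)$ — in characteristic $2$ the element $\unipotent{1}{1}{0}$ has order $4$ rather than $2$ because $(1+N)^2 = 1+N^2 \ne 1$ where $N^2 \ne 0$, and one verifies the quotient by the scalars is then elementary-abelian-of-exponent-$4$, i.e.\ $C_4^d$. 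In the $(2{+}1)$-Jordan case the module is $K[t]/(t^2) \oplus K$ and the centralizer decomposes, yielding the three "parabolic" families (iv), (vi), (vii) depending on which diagonal entry is the "$1$" and how the nilpotent part sits; each is abelian, of order $q^2$ or $q(q-1)$, and one checks maximality directly by showing any matrix commuting with a generic such element already lies in the family. Throughout, the reduction mod $Z$ is routine — $Z$ is the scalars, so one just records the resulting isomorphism type — and one should note overlaps (e.g.\ (vi) and (vii) are swapped by the inverse-transpose duality of $\PGL_3$, so they are listed separately but are "dual" rather than independent).

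\textbf{Main obstacle.} The conceptual content is light — it is "just" the classification of commuting matrix algebras in dimension $3$ — but the bookkeeping is the real work: one must be careful that the list is \emph{complete} (no abelian subgroup is omitted) and \emph{irredundant up to conjugacy}, which requires checking that each listed group is maximal (self-centralizing in its natural ambient, or at least not contained in a larger abelian subgroup) and that distinct items are non-conjugate. The subtlest single point is the characteristic-$2$ anomaly in case (v): one has to compute orders carefully in $\PGL_3(\F_{2^d})$ and recognize the group as $C_4^d$ rather than $C_2^{2d}$, which hinges on the Frobenius/squaring interaction $(1+N)^2 = 1+N^2$. Verifying the maximality of the two sporadic $C_3 \times C_3$'s in case (viii)–(ix) when $q \equiv 1 \pmod 3$ is the second place where a little care is needed, since one must rule out that they embed in a bigger abelian subgroup.
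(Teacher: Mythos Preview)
Your outline is largely on the right track and overlaps considerably with the paper's proof, but your opening reduction is not the correct one, and this causes a genuine gap later. A maximal abelian subgroup of $\PGL_3(K)$ does \emph{not} in general lift to an abelian subgroup of $\GL_3(K)$; its preimage $G$ satisfies only $Z \le G$ and $G' \le Z$. The paper makes this the organizing dichotomy: first the case $G' = 1$ (genuinely abelian preimage), which via the eigenvalue/Jordan analysis you describe yields exactly (i)--(vii); then the case $1 \ne G' \le Z$, which yields (viii)--(ix). Your treatment of (i)--(vii) is essentially the same as the paper's, modulo bookkeeping (the paper separates (iv) by ``some element has two distinct $K$-eigenvalues'' rather than by Jordan type, but this is cosmetic).

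The real gap is in your handling of (viii)--(ix). You describe them as ad hoc additions obtained by ``adjoining an order-$3$ element normalizing but not centralizing a torus'', and flag maximality as ``where a little care is needed'', but you give no argument that these exhaust the $G' \ne 1$ possibilities --- in particular, why the commutator must be a primitive cube root of unity, why $G$ cannot be larger than $\langle g,h\rangle Z$, and why no analogous extension exists over the unipotent families (iv)--(vii). The paper's argument here is short and worth knowing: if $[g,h] = \omega \in K^\times$ is a nontrivial scalar, then conjugation by $h$ permutes the generalized eigenspaces of $g$ via $\lambda \mapsto \omega\lambda$, forcing $\omega^3 = 1$; and for any $x \in G$ one has $[x,g] = \omega^i$, $[x,h] = \omega^j$, so $x g^{-j} h^i$ centralizes both $g$ and $h$ and hence lies in $Z$, giving $G = \langle g,h\rangle Z$. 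This simultaneously proves completeness and rules out nonabelian-mod-$Z$ extensions of the other families, which your plan does not address.
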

\begin{proof}
    It is equivalent to classify maximal subgroups $G \leq \GL_3(K)$ such that $Z \leq G$ and $G' \leq Z$.
    
    First suppose $G' = 1$.
    Let $g \in G$.
    If $h \in G$ then $h$ preserves the generalized eigenspaces of $g$ over $\bar K$, the algebraic closure of $K$.
    If $g$ has an eigenvalue of degree $3$ then we are in case \emph{(i)}, while if $g$ has an eigenvalue of degree $2$ then we are in case \emph{(ii)}.
    Hence we may assume all eigenvalues of all $g \in G$ are in $K$.
    
    Let $m$ be maximum number of distinct eigenvalues of any $g \in G$, so $m \in \{1, 2, 3\}$.
    If $m = 3$ then we are in case \emph{(iii)}.
    Suppose $m = 2$.
    Then some $g \in G$ has two distinct eigenvalues, one with algebraic multiplicity 2,
    so $G$ preserves a decomposion of the form $K^3 = U \oplus W$ where $\dim U = 2$ and $\dim W = 1$.
    By maximality, $G$ is the direct product of its projections to $\GL(U) \cong \GL_2(K)$ and $\GL(W) \cong K^\times$.
    In the $\GL_2(K)$ factor, each element must have the form $\text{scalar}\times\text{unipotent}$, so we can assume $G$ is upper-triangular.\footnote{Unipotent subgroups can be upper-triangularized: see~\cite{wehrfritz}*{Corollary~1.21}.  Alternatively, it is elementary that commuting sets of matrices can be upper-triangularized (over any field containing all their eigenvalues), by finding a common eigenvector $e_1$ and using induction on the quotient by $\langle e_1 \rangle$.}
    This is case \emph{(iv)}.
    
    Hence assume $m = 1$, so all $g \in G$ have the form $\text{scalar} \times \text{unipotent}$, and again we may choose a basis in which $G$ is upper-triangular.
    Since $G$ is maximal, $G = ZU$ for some unipotent subgroup
    \[
        U \leq \unipotent***.
    \]
    Since $U$ is abelian, we must have
    \[
        U = \left\{
            \unipotent{tx_0}{ty_0}{z}
            : t, z \in K
        \right\} \qquad (x_0, y_0 \in K).
    \]
    By conjugating by a diagonal matrix can assume $x_0, y_0 \in \{0, 1\}$, and by maximality $(x_0, y_0) \in \{(1, 1), (0, 1), (1, 0)\}$, giving cases \emph{(v)}, \emph{(vi)}, and \emph{(vii)}, respectively.
    
    Now suppose $G'$ is a nontrivial subgroup of $Z$.
    Pick $g, h \in G$ such that $[g, h]$ is some nontrivial scalar $\omega \in K$.
    Then $h$ permutes the generalized eigenspaces of $g$ and the eigenvalues according to $\lambda \mapsto \omega \lambda$,
    so we must have $\omega^3 = 1$ and $g$ has eigenvalues $\lambda, \omega\lambda, \omega^2\lambda$ for some $\lambda \in \bar K$.
    The determinant of $g$ is $\lambda^3$, so $\lambda \in (K^\times)^{1/3}$.
    Similarly, $h$ has eigenvalues $\mu, \omega\mu, \omega^2\mu$ for some $\mu \in (K^\times)^{1/3}$.
    
    We claim that $G = \langle g, h \rangle Z$.
    Suppose $x \in G$. Then $[x, g], [x, h] \in Z$, so $[x, g] = \omega^i$ and $[x, h] = \omega^j$ for some $i, j \in \{0, 1, 2\}$.
    Let $y = x g^{-j} h^i$. Then $[y, g] = [y, h] = 1$.
    But the centralizer of $\{g, h\}$ is $Z$, so $y \in Z$, so $x \in \langle g, h \rangle Z$.
    
    Suppose $\lambda, \mu \in K$.
    By replacing $g$ and $h$ with $g/\lambda$ and $h/\mu$ we may assume $\lambda = \mu = 1$.
    Then $g^3 = h^3 = 1$, and there is a basis in which $g$ and $h$ have the form stated in case \emph{(viii)}.
    
    Alternatively suppose one of $\lambda$ and $\mu$ is not in $K$, say $\lambda$.
    By replacing $h$ with $hg$ or $hg^2$ we may assume $\mu \in K$,
    and then by replacing $h$ with $h / \mu$ we may assume $\mu = 1$ and hence $h^3 = 1$.
    Since $\lambda$ has degree $3$ over $K$, we may identify $K^3$ with $L = \F_{q^3}$ in such a way that $g$ is multiplication by $\lambda$.
    Since $h$ is an $\F_q$-linear map which sends $\lambda \mapsto \omega \lambda \mapsto \omega^2 \lambda \mapsto \lambda$,
    it must be $\frob_q$ or $\frob_q^2$.
    Hence we get case \emph{(ix)}.
\end{proof}

\begin{remark}
	The subgroup structure of $\PSL_3(\F_q)$ was completely determined by Mitchell and Hartley in the early 20th century: see the survey by King~\cite{king-survey}*{Section~2.2}.
	The theorem above is essentially a special case of those classical results (apart from the wrinkle to do with $\PGL$ vs $\PSL$).

	Generally, maximal subgroups of classical groups are classified by a famous theorem of Aschbacher~\cite{aschbacher}. However, maximal abelian subgroups are usually \emph{not} maximal subgroups.
\end{remark}

Via \Cref{cor:restricted-projective-planes}, the subgroups \emph{(i)--(v)} give precisely the five constructions from the introduction (not in that order).
Let us recover, for example, \Cref{csn:1}.
The subgroup \emph{(v)} is
\[
    G = \left\{ \unipotent{a}{a}{b} : a,b \in K \right\} Z/Z \cong
    \begin{cases}
        K^2 &: q~\textup{odd},\\
        C_4^d &: q = 2^d.
    \end{cases}
\]
In \Cref{cor:restricted-projective-planes} take $p = (0 : 0 : 1)$ and $\ell = \{(X : Y : Z) : X = 0\}$.
Then the point--line stabilizer $\{g \in G: p^g \in \ell\}$ is the subgroup defined by $b = 0$.
If $q$ is odd, an isomorphism between $K^2$ and $G$ is given by
\[
    (x, y) \mapsto \unipotent{x}{x}{y + x(x-1)/2}.
\]
Hence a Sidon subset of $K^2$ is defined by $y + x(x-1)/2 = 0$,
which is indeed equivalent to \Cref{csn:1} up to a change of coordinates.
If $q$ is even, we find a Sidon subset of $C_4^d$ of size $2^d$.
An analogous derivation of the other constructions is left as amusement for the reader.

The four other cases \emph{(vi)--(ix)} of \Cref{prop:maximal-abelian} are unproductive from the point of view of \Cref{cor:restricted-projective-planes}.
Indeed, $(vi)$ has no large line orbit while $(vii)$ has no large point orbit, and the other cases are simply too small.

While the previous proposition classifies maximal abelian subgroups of $\PGL_3(K)$,
the fundamental theorem of projective geometry asserts that the full collineation group $\Aut \P^2(K)$
is the projective \emph{semilinear} group
\[
    \PGammaL_3(K) \cong \PGL_3(K) \rtimes \Gal(K),
\]
where $\Gal(K)$ is the Galois group of $K$ over the prime subfield $\F_p$.
There are many further maximal abelian subgroups of $\PGammaL_3(K)$ not contained in $\PGL_3(K)$,
but the following proposition establishes that, like the cases \emph{(vi)--(ix)} of \Cref{prop:maximal-abelian},
they are not useful for \Cref{cor:restricted-projective-planes}.

\begin{proposition}%
  \label{prop:semilinear-are-bad}
  Let $G$ be an abelian subgroup of $\PGammaL_3(K)$ not contained in $\PGL_3(K)$.
  Then $|G| \ll q$.
\end{proposition}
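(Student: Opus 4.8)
The plan is to show that $N := G \cap \PGL_3(K)$ embeds into $\PGL_3$ over the much smaller field $\F_{q_0}$, where $q_0 := q^{1/m}$ and $m\ge 2$ is the order of the image of $G$ in $\Gal(K/\F_p)$, and then apply \Cref{prop:maximal-abelian} over $\F_{q_0}$.

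First the setup. Write $q = p^e$ and let $\pi\colon \PGammaL_3(K) \to \Gal(K/\F_p)$ be the projection, with kernel $\PGL_3(K)$. Since $G\not\le\PGL_3(K)$, the image $\pi(G)$ is a nontrivial subgroup of the cyclic group $\Gal(K/\F_p)\cong C_e$, hence cyclic of some order $m$ with $2\le m\mid e$, and we may take its generator to be $\sigma\colon x\mapsto x^{q_0}$ with $q_0 := p^{e/m} = q^{1/m}$. Fix $g\in G$ with $\pi(g) = \sigma$, choose a lift $A\in\GL_3(K)$ of the $\PGL_3(K)$-component of $g$, and let $\overline A$ be its image in $\PGL_3(K)$. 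Put $N := \ker(\pi|_G) = G\cap\PGL_3(K)$, a normal subgroup with $G/N\cong\pi(G)$, so $|G| = m|N|$ and it remains to bound $|N|$.

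Now the descent. A direct computation in the semidirect product shows that conjugation by $g$ acts on the normal subgroup $\PGL_3(K)$ via $x\mapsto A\,\sigma(x)\,A^{-1}$, with $\sigma$ applied entrywise. Regard entrywise $\sigma$ as the $q_0$-power Frobenius endomorphism $\phi$ of the algebraic group $\PGL_3$ over $\overline{\F_p}$ (it restricts on $K$-points to the Galois automorphism above), so that this conjugation is the map $\bar F\colon x\mapsto \overline A\,\phi(x)\,\overline A^{-1} = (\operatorname{inn}_{\overline A}\circ\phi)(x)$ on $\PGL_3$. Since $\PGL_3$ is connected, Lang's theorem gives a $c\in\PGL_3(\overline{\F_p})$ with $\overline A = c\,\phi(c)^{-1}$; then $\bar F = \operatorname{inn}_c\circ\phi\circ\operatorname{inn}_c^{-1}$, so
\[
    \PGL_3(\overline{\F_p})^{\bar F} \;=\; c\,\PGL_3(\overline{\F_p})^{\phi}\,c^{-1} \;=\; c\,\PGL_3(\F_{q_0})\,c^{-1} \;\cong\; \PGL_3(\F_{q_0}).
\]
Since $G$ is abelian, $g$ centralises $N$, which is exactly to say that $N$ is fixed pointwise by $\bar F$; hence $N$ is isomorphic to an abelian subgroup of $\PGL_3(\F_{q_0})$. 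By \Cref{prop:maximal-abelian} over $\F_{q_0}$ — the largest maximal abelian subgroup being the Singer torus of case (i) — we get $|N|\le q_0^2+q_0+1$, and therefore
\[
    |G| \;=\; m|N| \;\le\; m\,(q_0^2+q_0+1) \;\ll\; q_0^m \;=\; q,
\]
using the elementary inequality $m\,q_0^2 \le 2\,q_0^m$, valid for every $m\ge 2$ since $q_0 = p^{e/m}\ge 2$. This is the claimed bound $|G|\ll q$.

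The step needing the most care is the passage to $\overline{\F_p}$. It is tempting to normalise $g$ directly over $\F_q$, conjugating inside $\PGammaL_3(K)$ so that $A$ becomes a scalar matrix via Hilbert~90; but this succeeds only when $g$ has order exactly $m$, equivalently when $A\,\sigma(A)\cdots\sigma^{m-1}(A)$ is scalar, and there is no reason for the element $g^m\in\PGL_3(K)$ to be trivial. Working over $\overline{\F_p}$ circumvents this, because Lang's theorem produces the conjugating element $c$ unconditionally, with no cocycle constraint. (Alternatively one can avoid Lang's theorem with a case analysis on the common generalised eigenspace decomposition of $N$ over $\overline K$, using that $g$ permutes the eigenspaces and applies $\sigma$ to the corresponding eigenvalues; but one must normalise $N$ and $g$ simultaneously, which makes that route noticeably fussier.)
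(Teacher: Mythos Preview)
Your argument is correct, and it takes a genuinely different route from the paper's own proof. The paper lifts to $\GammaL_3(K)$, passes to the linear part $G_0 = G \cap \GL_3(K)$, and then separates off a commutator homomorphism $\lambda\colon G_0 \to K^\times$ (measuring the scalar by which the chosen semilinear element $h$ fails to commute with elements of $G_0$); a norm/determinant trick bounds $|\lambda(G_0)|$, and on $\ker\lambda$ the conjugation action becomes the Galois action, so Schur's bound on commutative subalgebras of $M_3(K)$ plus a hand-rolled Galois-descent of the $k$-span yields $|\ker\lambda| \le |k|^3 - 1$. Your approach stays in $\PGammaL_3(K)$ and replaces all of this by a single appeal to Lang's theorem: the abelianness of $G$ says $N = G\cap\PGL_3(K)$ is fixed by the twisted Frobenius $\bar F = \operatorname{inn}_{\overline A}\circ\phi$, and Lang untwists $\bar F$ to the standard $q_0$-Frobenius, embedding $N$ into $\PGL_3(\F_{q_0})$, after which \Cref{prop:maximal-abelian} over $\F_{q_0}$ gives the bound $|N| \le q_0^2 + q_0 + 1$. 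Your proof is shorter and more conceptual (the Lang--Steinberg mechanism does the descent for free and explains why $\F_{q_0}$ appears), while the paper's is more self-contained and elementary (no algebraic groups, only Schur's commutative-subalgebra bound and explicit linear algebra), and gives an explicit constant. Both arrive at essentially the same final inequality $|G| \le m(q_0^2+q_0+1) \ll q$.
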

The proof of this proposition is somewhat off-topic so is placed in Appendix~\ref{app:semilinear-bad}.

\section{Nondesarguesian constructions}%
\label{sec:nondesarg}


We now consider Sidon sets coming from nondesarguesian planes.
Although a dizzying variety of nondesarguesian projective planes are known (see~\cite{weibel} or~\cite{handbook}),
the existence of a large abelian group of collineations cuts down the possibilities considerably,
as established by a fundamental classification theorem of Dembowski and Piper~\cite{dembowski--piper}.
Since we will rely on this theory in the next section, we now briefly summarize what is known and conjectured in this area.

Let $G$ be an abelian (or, more generally, quasiregular) collineation group of a projective plane $\calP$ of order $q$,
and assume $|G| > \frac12 (q^2 + q + 1)$.
Let $t$ be the number of point orbits.
It is possible to show that  $t$ is also the number of line orbits.
Let $F$ be the incidence structure consisting of the fixed points and the fixed lines.
The Dembowski--Piper classification asserts that one of the following holds (the labelling of the cases is standard):
\begin{enumerate}[(a)]
    \item $|G| = q^2 + q + 1$, $t = 1$, and $F$ is empty. In this case $G$ is transitive.
    \item $|G| = q^2$, $t = 3$, and $F$ is a flag, i.e., an incident point-line pair.
    \item $|G| = q^2$, $t = q + 2$, and $F$ is either a line and all its points or dually a point and all its lines.
    \item $|G| = q^2 - 1$, $t = 3$, and $F$ is an antiflag, i.e., a nonincident point-line pair.
    \item $|G| = q^2 - q^{1/2}$, $t = 2$, and $F$ is empty. In this case one point orbit and one line orbit form a subplane of order $q^{1/2}$.
    \item $|G| = q(q-1)$, $t = 5$, and $F$ consists of two points $u$, $v$, the line $\ell$ through $u$ and $v$, and another line $\ell' \neq \ell$ through $v$.
    \item $|G| = {(q-1)}^2$, $t = 7$, and $F$ consits of the vertices and sides of a triangle.
\end{enumerate}
(We have omitted a case included in~\cite{dembowski--piper} that was later shown not to arise in~\cite{ganley-mcfarland}.)

It is conjectured\footnote{This is an amalgamation of several conjectures, including in particular the well-known conjecture that all cyclic projective planes are desarguesian.} that all nondesarguesian planes in the Dembowski--Piper classification are type (b) (see Zhou~\cite{zhou}*{Section~1.9}), and the prime power conjecture for type (b) planes is known~\cite{PPCb}, so from now on assume $q = p^d$ for some prime $p$ and $d \geq 1$.
In even characteristic we must have $G = C_4^d$: see~\cite{zhou}.
In odd characteristic, all known examples have the following special form.

A \emph{planar function} (often called a \emph{perfect nonlinear function} or \emph{bent function} in computer science literature),
introduced by Dembowski and Ostrom~\cite{dembowski--ostrom},
is a function $\phi : \F_q \to \F_q$ such that $x \mapsto \phi(x+h) - \phi(x)$ is a bijection for each $h \neq 0$.
For any planar function $\phi$ the graph
\[
    S = \{(x, \phi(x)) : x \in \F_q\} \subset \F_q^2
\]
is a Sidon set of size $q$ with $\F_q^2 \setminus (S - S) = \{0\} \times \F_q \setminus \{0\}$.

If $\phi$ is quadratic then we get \Cref{csn:1}.
All planar functions over prime fields are quadratic \cites{gluck, ronyai--szonyi, hiramine},
but over general finite fields many nonquadratic examples are known.
Monomial examples include
\begin{align}
    \phi(x) &= x^{p^\alpha+1} & (q = p^d, d/(\alpha,d) ~ \text{odd}), \\
    \phi(x) &= x^{(3^\alpha + 1)/2} & (q = 3^d, (\alpha, 2d) = 1).
\end{align}
The latter example was a breakthrough discovery of Coulter and Matthews~\cite{coulter--matthews}.
There is a conjecture that these are in fact the only monomial examples: see Zieve~\cite{zieve} for progress on this conjecture.

All known examples of planar functions besides the Coulter--Matthews functions have the generalized quadratic form (sometimes called a Dembowski--Ostrom polynomial)
\begin{equation}
    \label{eq:genquad}
    \phi(x) = \sum_{i,j=0}^{d-1} a_{ij} x^{p^i + p^j}.
\end{equation}
It is easily proved that $\phi$ is planar function if and only if the polarization
\[
    \beta(x, y) = \phi(x + y) - \phi(x) - \phi(y) = \sum_{i,j=0}^{d-1} a_{ij} (x^{p^i} y^{p^j} + x^{p^j} y^{p^i})
\]
is nondegenerate in the sense that
\begin{equation}
    \label{eq:nondegen}
    \beta(x, y) = 0 \implies x = 0~\text{or}~y=0.
\end{equation}
For example, following~\cite{coulter--matthews}*{Theorem~3.4}, consider $q = 3^d$, $d$ odd, and
\[
    \phi(x) = x^{10} \pm x^6 - x^2.
\]
The polarization is
\[
    \beta(x, y) = x y^9 + x^9 y \mp x^3 y^3 + xy = xy ((x^4 + y^4)^2 + (x^2 y^2 \pm 1)^2).
\]
Since $a^2 + b^2 = 0$ implies $a = b = 0$ in $K$, the second factor is never zero,
so $\beta$ is nondegenerate and $\phi$ is a planar function.

Classifying planar functions of the form~\eqref{eq:genquad}
is equivalent to classifying symmetric bilinear maps $\beta : \F_p^d \times \F_p^d \to \F_p^d$
satisfying nondegeneracy~\eqref{eq:nondegen},
which in turn is equivalent to classifying commutative semifields up to isotopy.
See Kantor~\cite{kantor-survey} for a slew of examples.

\section{Conjectures}%
\label{sec:conjectures}

Recall that a Sidon subset $S$ of a group $G$ of order $n$ is called \emph{dense} if $|S| \geq (1 - o(1)) n^{1/2}$.
The examples we know (just \Crefrange{csn:1}{csn:5} and the examples in Section~\ref{sec:nondesarg}) point to the following conjecture.

\begin{conjecture}%
  \label{conj:main}
    Suppose $S$ is a dense Sidon set in an abelian group $G$ of order $n$.
    Then $G$ acts faithfully on a projective plane $\calP$ of size $|\calP| = (1+o(1)) n$ in such a way that for some point $p$ and line $\ell$,
    $S \subset \{g \in G : p^g \in \ell\}$.
\end{conjecture}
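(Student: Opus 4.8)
The plan is to reduce the conjecture to an ``approximate implies exact'' rigidity statement about projective planes and then attack that; we do not know how to carry this out, which is precisely why the statement is only conjectured. First, by \Cref{prop:correspondence} we replace $S$ by its development $\dev(S) = (G, G, I)$, a partial linear space on which $G$ acts regularly on both points and lines. Two points $p, p'$ are collinear in $\dev(S)$ exactly when $p - p' \in S - S$, and the Sidon property gives $|S - S| = |S|^2 - |S| + 1$; since $|S| \ge (1 - o(1)) n^{1/2}$ forces $n = (1 + o(1))|S|^2$, almost every pair of points is collinear, and dually (by self-duality of $\dev(S)$) almost every pair of lines is concurrent. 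By regularity of the $G$-action, these two ``defect'' relations are Cayley graphs on $G$ with connection sets $\bar D$ and $-\bar D$, where $\bar D = (G \setminus \{0\}) \setminus (S - S)$ has size $|\bar D| = n - 1 - |S|^2 + |S| = o(n)$. Thus $\dev(S)$ is an \emph{approximate projective plane}, and the task is to prove the rigidity statement that any such object is an induced substructure of a genuine projective plane $\calP$ --- that is, $\calP$ with some set of points and some set of lines deleted.

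Granting this, the conjecture follows at once: $|\calP|$ exceeds $n = |G|$ by the number of deleted points, which is $o(n)$, so $|\calP| = (1 + o(1))n$; the deleted points and lines are exactly the ``missing'' points and lines of \Cref{cor:restricted-projective-planes}, so $S = \{g \in G : p^g \in \ell\}$ for a suitable surviving point $p$ and line $\ell$; and $G$ extends to a collineation group of $\calP$ that is faithful, being already regular --- hence faithful --- on the surviving points. As $|G| = n > \tfrac12|\calP|$ eventually, the Dembowski--Piper classification then pins $(\calP, G)$ down to one of its seven families, and \eqref{eq:dbound} confirms that the deleted part is small.

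The shape of the sought plane is clear: its points should be those of $\dev(S)$ together with one ideal point for each ``parallel class'' of lines (a maximal set of pairwise non-concurrent lines), plus a bounded number of further ideal points, and dually for lines, with the obvious incidences; the precise configuration of the extra ideal points and lines is dictated by which Dembowski--Piper family one lands in. For this to make sense, non-concurrence of lines must be (essentially) an equivalence relation --- equivalently, the Cayley graph on $G$ with connection set $\bar D$ must be (essentially) a disjoint union of cliques, the cliques being the pencils at the ideal points. \emph{This transitivity of parallelism is the crux, and I expect it to be the entire difficulty}: unlike stability theorems for extremal graphs there is no room for a near-miss, a projective plane being rigid. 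Avenues I would try: (i) Fourier analysis on $G$, noting that $N N^{\mathsf T} = |S|\, I + A$ with $N$ the incidence operator of $\dev(S)$ and $A$ the (Cayley) collinearity graph, so that the spectrum of $\dev(S)$ is governed by $\widehat{1_{\bar D}}$, and then trying to show --- using the extremely tight identity $|S - S| = |S|^2 - |S| + 1$ together with a Freiman--Bogolyubov-type argument --- that the large Fourier coefficients of $1_{\bar D}$ occupy the annihilator of a subgroup $H \le G$, so that $\bar D$ is close to a union of cosets of $H$ and the parallel classes are those cosets; (ii) direct enumeration of small configurations (triangles, quadrilaterals, near-pencils) in $\dev(S)$, matching the counts against those of a plane with points deleted; (iii) spectral comparison of $N N^{\mathsf T}$ with the Gram matrix $qI + J$ of the incidence matrix of a genuine plane of order $q$. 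None of these is known to close the gap; proving transitivity of parallelism is plausibly of the same order of difficulty as well-known open problems about projective planes with large collineation groups.
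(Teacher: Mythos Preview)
This statement is a \emph{conjecture}, not a theorem: the paper offers no proof whatsoever and says explicitly, immediately after stating it, ``We have no idea how to approach these conjectures. Maybe they are false and we are just bad at constructing examples.'' You recognize this yourself in your first sentence, so there is nothing to compare your attempt against.

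Your heuristic outline is sound and in fact parallels the paper's own thinking closely. Your ``crux'' --- that the Cayley graph on the defect set $\bar D = (G\setminus\{0\})\setminus(S-S)$ should decompose into cliques, equivalently that non-concurrence should be transitive --- is essentially the paper's weaker Conjecture~5.2, which posits that $T = (G\setminus(S-S))\cup\{0\}$ is a union of $O(1)$ subgroups: the cosets of those subgroups are precisely your parallel classes. Your subsequent appeal to Dembowski--Piper once a plane is obtained is exactly how the paper extracts Corollary~5.4 from the conjecture.

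One genuine gap in your sketch, beyond the acknowledged open crux: even granting that $\dev(S)$ embeds as an induced substructure of some projective plane $\calP$, it does not follow automatically that the regular $G$-action on $\dev(S)$ extends to a collineation group of $\calP$. In the completion you describe (adjoining ideal points for parallel classes), $G$ would indeed permute the parallel classes and so plausibly extend, but this requires the completion to be canonical, or at least $G$-equivariant --- and uniqueness of the completion is itself part of the rigidity you are assuming. You should flag this as an additional step to be justified, not something that ``follows at once''.
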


Equivalently, the development $\dev(S)$ may be completed to a projective plane by adding $o(|G|)$ points and lines.
The following conjecture is slightly weaker.

\begin{conjecture}
    Suppose $S$ is a dense maximal Sidon set in a group $G$ of order $n$.
    Then $T = G \setminus (S - S) \cup 0$ is the union of $O(1)$ subgroups.
\end{conjecture}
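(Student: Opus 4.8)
One possible approach is to deduce this from \Cref{conj:main} together with the Dembowski--Piper classification. Suppose $S$ is a dense maximal Sidon set in an abelian group $G$ of order $n$. From the Sidon bound $|S-S| = |S|^2 - |S| + 1$ together with density we get $|S| = (1+o(1)) n^{1/2}$ and $|T| = n - |S|^2 + |S| = o(n)$. Granting \Cref{conj:main}, $G$ acts faithfully on a projective plane $\calP$ of order $q$ with $q^2+q+1 = (1+o(1)) n$, and $S \subseteq \{g \in G : p^g \in \ell\}$ for some point $p$ and line $\ell$. The first step is to use maximality of $S$ to promote this inclusion to an equality: after discarding a negligible number of exceptional points and lines one reduces to the case $G_p = G_\ell = 1$, so that $S' := \{g \in G : p^g \in \ell\}$ is itself a Sidon set by \Cref{prop:correspondence}; since $S \subseteq S'$ and $S$ is maximal, $S = S'$. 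Identifying the points of $\dev(S)$ with the orbit $Gp$ and its lines with $G\ell$, one then has that $T \setminus \{0\}$ is exactly the set of $g \in G$ for which the unique line of $\calP$ joining the distinct points $p$ and $p^g$ does not lie in $G\ell$.

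Since $|G| = n = (1+o(1))(q^2+q+1) > \tfrac12(q^2+q+1)$ for $n$ large, the Dembowski--Piper classification applies to the pair $(\calP, G)$: one of the seven types (a)--(g) holds, with $t \le 7$ point orbits, $t$ line orbits, and the fixed points and lines forming a configuration of bounded size (a flag, an antiflag, a triangle, etc.). Here $p$ lies in the unique point-orbit of size $|G|$ and $\ell$ in the unique line-orbit of size $|G|$, and the lines outside $G\ell$ lie in the remaining at most $t-1$ line-orbits, each described explicitly by the classification. In each of the types that can actually occur one checks $d = O(1)$ in the notation of \Cref{cor:restricted-projective-planes}, so in fact $|S| = q + 1 - O(1)$ and $|T| = O(n^{1/2})$.

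The third step is a case analysis over the Dembowski--Piper types, showing in each case that $T$ is a union of at most $t$ subgroups of $G$. The mechanism is uniform: after deleting the fixed lines, $G$ acts on the resulting ``affine'' structure in a translation-like (indeed essentially regular) way, and for each exceptional line-orbit $\mathcal{O}$ the set $H_{\mathcal{O}} := \{0\} \cup \{g \in G : \text{the line joining } p \text{ and } p^g \text{ lies in } \mathcal{O}\}$ is the ``direction'' subgroup of a pencil through a fixed point, hence a genuine subgroup of $G$; then $T = \bigcup_{\mathcal{O}} H_{\mathcal{O}}$. This should be sanity-checked against the known examples: for a Singer difference set (type (a)) $T = \{0\}$; for the Erd\H{o}s--Tur\'an and general planar-function examples (type (b)) $T$ is a single subgroup of order $q$; for Bose (type (d)) a single subgroup of order $q-1$; for Spence (type (f)) a union of two subgroups; and for the Hughes--Cilleruelo example (type (g)) the union of the three ``coordinate axis'' subgroups of $K^\times \times K^\times$. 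In even characteristic one feeds in separately that $G = C_4^d$ and checks that $T$ is the $2$-torsion subgroup.

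The main obstacle is, unsurprisingly, \Cref{conj:main} itself: it amounts to an inverse theorem to the effect that a partial linear space which is regular of degree $|S|$, admits a regular abelian automorphism group, and has $|S| = (1-o(1)) n^{1/2}$ on $n$ points, must embed into a genuine projective plane with only $o(n)$ points and lines removed --- that is, ``almost a projective plane'' forces ``a projective plane'' in the presence of a regular symmetry. This is wide open and is the crux; by contrast, steps two and three above are essentially bookkeeping once the classification is in hand. A secondary line of attack would be to bypass \Cref{conj:main} and argue directly with the Cayley graph $\operatorname{Cay}(G, T \setminus \{0\})$, whose edges are precisely the pairs of points of $\dev(S)$ joined by no line of $\dev(S)$: this graph is highly constrained --- every vertex has degree $|T|-1$, and its triangles are governed by the $C_4$-freeness of $\dev(S)$ --- and one would hope that this rigidity already forces $T$ to be a bounded union of subgroups, but we do not see how to make such an argument unconditional.
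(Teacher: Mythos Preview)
The statement under discussion is a \emph{conjecture}, not a theorem, and the paper offers no proof: immediately after stating it the authors write ``We have no idea how to approach these conjectures.'' So there is no paper proof to compare your proposal against. What the paper does say is that this conjecture is ``slightly weaker'' than \Cref{conj:main}, and your proposal is precisely an outline of that implication. You are explicit that \Cref{conj:main} is the entire crux and that your argument is conditional on it; in that sense you and the paper are in complete agreement about the status of the problem.

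Your conditional argument is essentially sound, with two small corrections. First, the order of your first two steps should be reversed: you cannot ``reduce to the case $G_p = G_\ell = 1$'' before invoking Dembowski--Piper, because \Cref{conj:main} does not tell you which orbits $p$ and $\ell$ lie in. Rather, apply Dembowski--Piper first; then in each type except (c) there is a unique point orbit and a unique line orbit of full size $|G|$, and the density of $S$ forces $p$ and $\ell$ to lie in these (if either lies in a smaller orbit, $\{g : p^g \in \ell\}$ is contained in a single coset of a stabilizer of order $\asymp q$, so any Sidon subset has size $O(q^{1/2})$, contradicting density). This also disposes of type (c), where there is no large line orbit (or, in the dual subcase, no large point orbit). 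Second, your uniform mechanism ``$H_{\mathcal O}$ is the direction subgroup of a pencil through a fixed point'' covers types (b), (d), (f), (g), but in type (e) there are no fixed points; there the unique exceptional line $m$ through $p$ is a Baer-subplane line, and one checks directly that $\{g : p^g \in m\} = G_m$. In fact the clean general statement is: each exceptional line $m$ through $p$ satisfies $\{g : p^g \in m\} = G_m$ (because $p$ lies on exactly one line from each exceptional line orbit), so $T = \bigcup G_{m_i}$ with $d$ summands, and $d \le 3$ in every surviving Dembowski--Piper type. None of this, of course, touches the real difficulty, which as you say is entirely \Cref{conj:main}.
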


For example, in \Cref{csn:5}, $T$ is the union of three subgroups.
It may be that this is the worst case.

We have no idea how to approach these conjectures.
Maybe they are false and we are just bad at constructing examples.
We are unable to solve even the following basic cases.

\begin{conjecture}
    Let $G = \F_p^2$.
    Suppose $S \subset G$ is a Sidon set of size $p$.
    Then $T = G \setminus (S - S) \cup 0$ is a subgroup of order $p$.
\end{conjecture}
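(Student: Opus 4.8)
The plan is to reduce the statement to the claim that $T$ is a subgroup of $G$; the parenthetical then follows from the results recalled in Section~\ref{sec:nondesarg}. Suppose $T$ is a subgroup. Since $|T| = p$ is prime, $T$ is a line through the origin, so after a linear change of coordinates we may assume $T = \{0\} \times \F_p$. Then $(S - S) \cap (\{0\} \times \F_p) = \{0\}$, which forces the first coordinates of the points of $S$ to be pairwise distinct; as $|S| = p$ this means $S = \{(x, \phi(x)) : x \in \F_p\}$ for some $\phi \colon \F_p \to \F_p$. The Sidon condition for such a graph is readily seen to be equivalent to $t \mapsto \phi(t + h) - \phi(t)$ being a bijection of $\F_p$ for every $h \neq 0$ (cf.\ Section~\ref{sec:nondesarg}), i.e., to $\phi$ being a planar function; and since every planar function on a prime field is quadratic, $\phi(t) = at^2 + bt + c$ with $a \neq 0$, so $S$ is a parabola (in the coordinates of \Cref{csn:1}).

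It therefore suffices to prove $T$ is a subgroup. By \Cref{prop:correspondence}, $\dev(S)$ is a partial linear space on $P = L = G$, and since $S$ is Sidon the number of lines joining two distinct points $x, x'$ equals the number of representations of $x - x'$ as a difference of two elements of $S$, hence is $1$ if $x - x' \in (S - S) \setminus \{0\}$ and $0$ if $x - x' \in T \setminus \{0\}$. So $T$ is a subgroup if and only if the relation ``$x - x' \in T$'' is transitive, equivalently if and only if $S$ meets every line of some parallel class exactly once. To study this, fix a direction $\langle d \rangle$ (a one-dimensional subspace), let the $p$ affine lines in that direction meet $S$ in $s_1^{(d)}, \dots, s_p^{(d)}$ points respectively, and let $\delta_d$ count the nonzero elements of $T$ on $\langle d \rangle$. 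The nonzero differences of $S$ are distinct (Sidon), and those lying in $\langle d \rangle$ are exactly the differences of pairs of points of $S$ on a common affine line in direction $d$; hence $\delta_d = (p - 1) - \sum_i s_i^{(d)}(s_i^{(d)} - 1)$. Summing over the $p + 1$ directions, $\sum_d \delta_d = |T| - 1 = p - 1$, while $\delta_d \leq p - 1$, with equality precisely when $S$ is a transversal to the parallel class $d$. So the entire problem reduces to showing that the total deficit $p - 1$ is concentrated in a single direction.

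That last step is the main obstacle, and we do not know how to carry it out: it would follow from \Cref{conj:main}, and no weaker input we can see suffices. The difficulty is that the identities above --- and indeed the full list of Fourier identities for a maximum Sidon set in $\F_p^2$, namely $\sum_{\chi \neq 1} |\hat{1_S}(\chi)|^2 = p^2(p-1)$ and $\sum_{\chi \neq 1} |\hat{1_S}(\chi)|^4 = p^3(p-1)$ --- are all consistent with a hypothetical $S$ whose difference set meets every one of the $p + 1$ directions; one must use the Sidon property beyond merely counting distinct differences. A natural synthetic approach is to try to adjoin $p + 1$ points and $p + 1$ lines to $\dev(S)$ so as to form a projective plane of order $p$, and to show that this forces a nontrivial additive quadruple of $S$ (a parallelogram $\{u, u + a, u + b, u + a + b\} \subseteq S$) unless $T$ is a subgroup --- this is the mechanism behind \Cref{conj:main}, but we cannot make it work. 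A more elementary alternative is to argue directly that a $p$-point subset of $\F_p^2$ whose secant lines occupy two or more distinct directions must contain such a parallelogram. We suspect the truth is closer to the synthetic picture; a reasonable first step would be to compute all maximum Sidon sets of $\F_p^2$ for small primes $p$ and look for a uniform argument.
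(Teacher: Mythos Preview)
The statement is a \emph{conjecture}: the paper offers no proof and explicitly says, just before stating it, ``We have no idea how to approach these conjectures. \dots\ We are unable to solve even the following basic cases.'' So there is no proof in the paper to compare against.

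Your proposal is not a proof either, and you say so yourself. The first paragraph --- deriving the parenthetical from the hypothesis that $T$ is a subgroup --- is correct and matches the argument implicit in Section~\ref{sec:nondesarg}: if $T$ has order $p$ it is a line through the origin, whence $S$ is the graph of a function, the Sidon condition is exactly planarity, and the Gluck--Hiramine--R\'onyai--Sz\H{o}nyi theorem forces a quadratic. Your second paragraph is a clean and correct reformulation: with $\delta_d = (p-1) - \sum_i s_i^{(d)}(s_i^{(d)}-1)$ one has $\sum_d \delta_d = p-1$ and $0 \le \delta_d \le p-1$, so $T$ is a subgroup if and only if the entire deficit $p-1$ sits in a single direction. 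But, as you acknowledge in the third paragraph, this is exactly the content of the conjecture and you do not know how to establish it.

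In short: the paper has no proof, and your write-up is a correct reduction plus an honest admission of the remaining gap, not a proof. The gap you identify --- showing that the $p-1$ ``missing differences'' all lie on one line --- is precisely the open problem.
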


Note that if the conclusion above holds then $S$ is a linear transformation of the graph of a function.
By the results cited in Section~\ref{sec:nondesarg}, it then follows that $S$ is a parabola.
This problem was independently posed by Cilleruelo: see~\cite{cilleruelo-mem}*{Problem~3}.

\begin{conjecture}[Michael Tait, personal communication]
    Let $p$ be a (sufficiently large) prime and let $G = C_{p^2+p+1}$.
    Suppose $S \subset G$ is a Sidon set of size $p$.
    Then $S$ is a subset of some perfect difference set $S'$ of size $p+1$.
\end{conjecture}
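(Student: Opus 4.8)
\section*{Proof proposal}

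The plan is to recast the conjecture as an additive extension problem and attack that. By \Cref{prop:correspondence}, the Sidon set $S\subseteq G=C_n$ with $n=p^2+p+1$ gives a development $\dev(S)$ which is a partial linear space with $G$ acting regularly on its $n$ points and $n$ lines, every point lying on $p$ lines and every line carrying $p$ points. Since $S$ is Sidon its nonzero differences are distinct, so $|S-S|=p^2-p+1$ and, as in Section~\ref{sec:conjectures}, the symmetric set $T=(G\setminus(S-S))\cup\{0\}$ has size $2p+1$. Adjoining one element $s^\ast\in G\setminus S$ produces a set $S'=S\cup\{s^\ast\}$ of size $p+1$ whose $(p+1)p=n-1$ pairwise differences are all distinct---i.e.\ which is a perfect difference set---exactly when the $2p$ ``new'' differences $(s^\ast-S)\cup(S-s^\ast)$ fill $T\setminus\{0\}$. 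In that case $\dev(S')$ is a projective plane of order $p$ on the same point and line sets as $\dev(S)$, all of whose incidences extend those of $\dev(S)$, and $S\subseteq S'$ is the desired perfect difference set; conversely every perfect difference set of size $p+1$ containing $S$ arises so. Thus the conjecture is \emph{equivalent} to the existence of some $s^\ast\in G$ with $(s^\ast-S)\cap(S-S)=\varnothing$ and $(s^\ast-S)\cap(S-s^\ast)=\varnothing$: the first condition puts the $p$-set $s^\ast-S$, hence also $S-s^\ast=-(s^\ast-S)$, inside the symmetric $2p$-set $T\setminus\{0\}$; the second makes these two $p$-sets disjoint; and then counting forces $(s^\ast-S)\sqcup(S-s^\ast)=T\setminus\{0\}$. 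Equivalently still: $(S+S-S)\cup\tfrac12(S+S)\neq G$, where $\tfrac12$ means multiplication by $2^{-1}\bmod n$ (well defined as $n$ is odd).

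The second set is harmless: since $S$ is Sidon the $\binom{p+1}{2}=\tfrac12(p^2+p)$ sums $s+s'$ with $s,s'\in S$ are distinct, so $\tfrac12(S+S)$ covers fewer than half of $G$. The genuine difficulty is $S+S-S$, which can have size $\Theta(p^3)\gg n$; worse, no counting argument can even establish $S+S-S\neq G$, because $\sum_{g\in G}|(g+S)\cap(S+S)|=|S|\,|S+S|=\tfrac12p^2(p+1)$ has average $\approx p/2$ over the $n\approx p^2$ elements $g$, so a typical $g$---perhaps every $g$, for a ``generic'' Sidon set of this size---lies in $S+S-S$. Any proof must therefore use structure of $S$ beyond the Sidon property. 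The natural input is that $S$ is an \emph{almost} perfect difference set: from $r_S=\mathbf{1}_G-\mathbf{1}_T+p\,\delta_0$ one gets $|\hat S(t)|^2=p-\widehat{\mathbf{1}_T}(t)$ for $t\neq 0$, and since $\sum_t|\widehat{\mathbf{1}_T}(t)|^2=n(2p+1)$ the discrepancy $\widehat{\mathbf{1}_T}(t)$ is, for most $t\neq 0$, of size $O(p^{1/2})$. One would like to exploit this near-flatness---e.g.\ through a second-moment count of admissible $s^\ast$, or through the observation that an admissible $s^\ast$ is precisely one for which $2\operatorname{Re}\!\left(\zeta_n^{ts^\ast}\,\overline{\hat S(t)}\right)=p-1-|\hat S(t)|^2$ for every $t\neq 0$---to conclude that some admissible $s^\ast$ exists.

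I expect this final step to be the decisive obstacle, and I have no way to carry it out. The statement is essentially a \emph{stability} version of the folklore conjecture that the Singer difference set is the only planar difference set in $C_{p^2+p+1}$ up to translation and multiplication by units---equivalently, that every cyclic projective plane of prime order is desarguesian---and, as with \Cref{conj:main}, we possess no mechanism that forces an ``almost projective plane'' like $\dev(S)$ to complete to a true plane. A complementary attempt---construct the Singer difference set explicitly, note that the prime $p$ is a numerical multiplier of any planar difference set in $C_n$ (as $p\mid p$, $p>\lambda=1$, $\gcd(p,n)=1$), so it may be taken invariant under $x\mapsto px$, whose orbits on $\mathbf{Z}/n\mathbf{Z}$ have size $1$ or $3$, and then show $S$ must be a translate of a $p$-subset of some such union of orbits---founders for the same reason: the multiplier constraints bind the hypothetical completion $S'$, not the given Sidon set $S$ of size $p$ itself.
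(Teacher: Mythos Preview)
The paper does not prove this statement: it is presented as an open conjecture, immediately after the authors write that they ``have no idea how to approach these conjectures'' and ``are unable to solve even the following basic cases.'' There is therefore no proof in the paper to compare your attempt against.

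Your proposal is not a proof either, and to your credit you say so explicitly. The reformulation you give is correct and clean: completing $S$ to a perfect difference set is equivalent to finding $s^\ast\notin (S+S-S)\cup\tfrac12(S+S)$, and your Fourier identity $|\hat S(t)|^2 = p - \widehat{\mathbf{1}_T}(t)$ for $t\neq 0$ is accurate. But, as you yourself diagnose, $S+S-S$ can plausibly cover all of $G$ for a generic Sidon set of this size, so no soft counting or Fourier argument can succeed without additional structural input --- and that input is exactly what is missing, both from your attempt and from the paper. Your reading of the problem as a stability version of the cyclic-plane uniqueness conjecture is in the same spirit as the paper's discussion in Section~\ref{sec:conjectures}; neither you nor the authors have a mechanism to force the completion.
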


If true, however, Conjecture~\ref{conj:main} places serious constraints on which abelian groups admit dense Sidon subsets, since we can import the constraints mentioned in Section~\ref{sec:nondesarg}.
One concrete example is the following.

\begin{corollary}%
  \label{cor:conj}
  Suppose \Cref{conj:main} holds.
  Then there is a constant $\eps > 0$ such that the following is true.
  Suppose $S$ is a dense Sidon set in an abelian group $G$ of order $n$, and suppose $|S| > (1-\eps)n^{1/2}$.
  Then
  \[
    |G| \in \{
    q^2+q+1,
    q^2,
    q^2-1,
    q^2-q^{1/2},
    q(q-1),
    (q-1)^2
    \}
  \]
  for some integer $q>1$.
  In particular,
  \[
    \liminf_{n \to \infty} \max \bigl\{ |S| \colon S \subseteq \Z/n\Z \text{ Sidon} \bigr\} / n^{1/2} < 1.
  \]
\end{corollary}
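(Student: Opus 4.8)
The plan is to feed a sufficiently dense Sidon set into \Cref{conj:main} and then read off the possible values of $|G|=n$ from the Dembowski--Piper classification recalled in \Cref{sec:nondesarg}.

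Fix a small constant $\eps>0$, to be pinned down at the very end, and suppose $S$ is a dense Sidon set in an abelian group $G$ of order $n$ with $|S|>(1-\eps)n^{1/2}$. By \Cref{conj:main} there is a faithful action of $G$ on a projective plane $\calP$ of some order $q$, with $q^2+q+1=|\calP|=(1+o(1))n$, such that $S\subseteq\{g\in G:p^g\in\ell\}$ for some point $p$ and line $\ell$. Since $q^2+q+1=(1+o(1))n\to\infty$ we have $q\to\infty$, and $|G|=n=(1-o(1))(q^2+q+1)$, so once $n$ is large enough both $|G|>\tfrac12(q^2+q+1)$ and $q\geq 2$. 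Identifying $G$ with its image in $\Aut\calP$, the group $G$ is then an abelian---hence quasiregular---collineation group of a projective plane of order $q$ with $|G|$ exceeding half the order of the plane, so one of the cases (a)--(g) of Dembowski--Piper applies, and in each of them $|G|$ equals one of
\[
    q^2+q+1,\quad q^2,\quad q^2-1,\quad q^2-q^{1/2},\quad q(q-1),\quad (q-1)^2
\]
(cases (b) and (c) both give $q^2$, so this is a list of six values, not seven). Since $|G|=n$, this is the asserted membership. Two things should be checked in passing: that each of these six values is consistent with $|\calP|=(1+o(1))n$---it is, since each equals $(1-o(1))q^2$---and that the case of~\cite{dembowski--piper} excluded by~\cite{ganley-mcfarland} contributes no further value, which it does not.

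For the $\liminf$ statement, write $\sigma_n=\max\{|S|:S\subseteq\Z/n\Z\text{ Sidon}\}$. Solving $|S|(|S|-1)\leq n-1$ gives $\sigma_n<n^{1/2}+\tfrac12$, so $\limsup_{n\to\infty}\sigma_n/n^{1/2}\leq 1$. Suppose for contradiction that $\liminf_{n\to\infty}\sigma_n/n^{1/2}\geq 1$; then $\sigma_n/n^{1/2}\to 1$, so for every $n$ a Sidon set realising $\sigma_n=(1+o(1))n^{1/2}$ is a dense Sidon set in $\Z/n\Z$, and for $n$ large its size exceeds $(1-\eps)n^{1/2}$. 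By the first part of the corollary, every sufficiently large $n$ then lies in
\[
    \{\,q^2+q+1,\ q^2,\ q^2-1,\ q^2-q^{1/2},\ q(q-1),\ (q-1)^2 : q\in\Z,\ q>1\,\}.
\]
But each of these six functions of $q$ lies between $(q-1)^2$ and $3q^2$ for $q\geq 2$, so this set contains only $O(X^{1/2})$ integers up to $X$; in particular it omits infinitely many integers, a contradiction. Hence $\liminf_{n\to\infty}\sigma_n/n^{1/2}<1$.

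Granted \Cref{conj:main}, the argument is essentially bookkeeping, and the only points that need real care are quantitative. The main one is matching the \emph{asymptotic} content of ``dense'' (and of the $o(1)$ in $|\calP|=(1+o(1))n$) against the single uniform constant $\eps$: one has to be sure that for all $n$ beyond some threshold the conclusion of \Cref{conj:main} can be invoked with enough room that the implied $o(1)$ forces both $|G|>\tfrac12(q^2+q+1)$ and $q\geq 2$, which is what Dembowski--Piper requires. This is the step where the precise statement of the conjecture matters; the rest is the counting bound on the admissible set of orders.
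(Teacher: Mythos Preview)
Your argument is correct and follows the same route as the paper: apply \Cref{conj:main}, check that $|G|>\tfrac12(q^2+q+1)$, then read off the Dembowski--Piper list. The one structural difference is that the paper proves the first part by contradiction: if no $\eps$ works, take $\eps_i\to 0$ and corresponding counterexamples $(G_i,S_i)$ with $n_i$ not in the list; this assembled sequence is automatically dense, so the conjecture applies to it directly and Dembowski--Piper gives the contradiction. That setup absorbs precisely the quantitative issue you flag in your final paragraph (matching the asymptotic $o(1)$ in $|\calP|=(1+o(1))n$ to a single uniform $\eps$), since the dense sequence is constructed rather than handed to you. Your proof of the $\liminf$ statement via the $O(X^{1/2})$ counting bound on the admissible orders is more explicit than the paper's, which simply records it as ``in particular''.
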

\begin{proof}
    Suppose there is no such constant $\eps>0$.
    Then there is a sequence of abelian groups $G_i$ of order $n_i$ and Sidon sets $S_i \subset G_i$ such that $|S_i| \geq (1 - o(1)) n_i^{1/2}$ and such that $n_i$ is not of any of the given forms.
    Assuming \Cref{conj:main} holds, $G_i$ acts faithfully on a projective plane $\calP_i$ of size $|\calP_i| = (1 + o(1)) n_i$.
    In particular $n_i > |\calP_i| / 2$.
    Now the Dembowski--Piper classification gives a contradiction.
\end{proof}
Further refinements are possible if we also assume some of the conjectures discussed in Section~\ref{sec:nondesarg}.
For example, it should be true that if $G$ admits a dense Sidon set then either it is one of the groups appearing in Proposition~\ref{prop:maximal-abelian}, or $|G|=q^2$ for $q$ a prime power.
Extracting further consequences of this type is left to the reader.

\section{Less dense Sidon sets}%
\label{sec:smaller}

\renewcommand{\theconstruction}{\Alph{construction}}
\setcounter{construction}{0}

\subsection{Background}

As we have mentioned, random or greedy constructions of Sidon sets in a group of order $n$ tend to have size only $n^{1/3}$ or so,
while Sidon sets of size $(1 - o(1))n^{1/2}$ appear to have very restricted structure.
Between these two extremes there is a lot of variety and it is unclear what if any sort of structure should exist in general.

As in the introduction, we collect some examples of these less dense Sidon sets (both published and unpublished) and in some cases their justifications, so that we may later observe a general pattern.
In accordance with Ruzsa's maxim, the constructions all involve the primes in some way.

First, Ruzsa~\cite{ruzsa-infinite} constructed an infinite Sidon set containing $n^{\sqrt2-1+o(1)}$ elements of $\{1, \dots, n\}$ for all $n$.\footnote{Any of the finite constructions (\Crefrange{csn:1}{csn:5}) can be adapted to construction an infinite Sidon set $S \subset \Z$ such that $\limsup |S \cap \{1, \dots, n\}| / n^{1/2} > 0$, but constructing Sidon sets with $|S \cap \{1, \dots, n\}|$ large for \emph{all} $n$ is a different ball game. Despite considerable attention, the exponent $\sqrt{2}-1$ has not been improved.}
  The construction starts with the observation that $\{\log p : p~\text{prime}\}$ is a Sidon set of real numbers.
  As observed by Cilleruelo (see Gowers~\cite{gowers}), a finite version of the same argument produces a Sidon subset of $\{1, \dots, n\}$ of size $n^{1/2} / (\log n)^{3/2}$.

\begin{construction}[Logarithms of primes]%
  \label{construction:log-primes}
  Let $\mathcal{P}_X$ be the set of all primes $p \leq X$, for some parameter $X$ to be determined. For primes $p, q, r, s \in \mathcal{P}_X$, by unique factorization if $\{p, q\} \neq \{r, s\}$ we have $pq \neq rs$, so $|pq - rs| \geq 1$. Since $\log x$ has derivative $1/x$ it follows that
  \[
    |\log(pq) - \log(rs)| \geq X^{-2}.
  \]
  Hence
  \[
    |3X^2 \log p + 3X^2 \log q - 3X^2 \log r - 3X^2 \log s| \geq 3
  \]
  and, defining $\lambda_p = \floor{3X^2 \log p}$,
  \[
    |\lambda_p + \lambda_q - \lambda_r - \lambda_s| \geq 1.
  \]
  Thus $S = \{\lambda_p : p \in \mathcal{P}_X\}$ is a Sidon set in $\{1, \dots, \floor{3X^2 \log X}\}$. Taking $X$ so that $3X^2 \log X \sim n$, we have a Sidon set in $\{1, \dots, n\}$ of size
  \[
    |S| = \pi(X) \sim \frac{X}{\log X} \asymp \frac{n^{1/2}}{(\log n)^{3/2}}.
  \]
\end{construction}


The next simple example has not appeared much in the literature.
To the best of our knowledge it was first mentioned by Cilleruelo in~\cite{cilleruelo-infinite}.
\begin{construction}[Primes in a quotient ring]%
  \label{construction:quotient-ring}
  Let $m$ be a positive integer and set $G = (\Z/m\Z)^\times$.  The set
  \[
    S = \{ p \bmod m : p \text{ prime},\ 1 < p \le m^{1/2} \}
  \]
  is Sidon: indeed, if four primes $1 < p_1,\dots,p_4 \le m^{1/2}$ obey $p_1 p_2 \equiv p_3 p_4 \pmod m$, i.e.\ $m \mid (p_1 p_2 - p_3 p_4)$, then as $|p_1 p_2 - p_3 p_4| < m$ we must have $p_1 p_2 = p_3 p_4$ and hence $\{p_1, p_2\} = \{p_3, p_4\}$ by unique factorization.  We have
  \[
    |S| = \pi(m^{1/2}) \sim 2 m^{1/2} / \log m
  \]
  and $|G| = \phi(m)$ which is asymptotically somewhere between $m$ and $m/\log \log m$.
\end{construction}


The following is a neat variant of Ruzsa's construction.
See Maldonado~\cite{maldonado}*{Theorem~2.2} for details.
\begin{construction}%
  \label{construction:gaussian}
  For each (rational) prime $p \equiv 1 \pmod 4$,
  factorize $p = \rho_p \bar \rho_p$ in $\Z[i]$,
  normalized so that $0 < \Im \rho_p < \Re \rho_p$,
  and let $\phi_p = \arg(\rho_p^4) / 2\pi \in (0, 1/2)$.
  Take $S = \{\floor{n \phi_p} : p \leq n^{1/2} / 4\}$.

  The construction achieves $|S| \gg n^{1/2} / \log n$.
\end{construction}


\def\pp{\mathfrak{p}}
\def\mm{\mathfrak{m}}
\def\Q{\mathbf{Q}}
\def\Cl{\opr{Cl}}

The next one was related to us by Ben Green,
who heard it from Ellenberg and Venkatesh.

\begin{construction}%
  \label{construction:imag-quadratic}
  Assume the Generalized Riemann Hypothesis (GRH). 
  Let $K = \Q(\sqrt{-D})$ and let $G = \Cl(K)$ be the class group.
  Let $S \subset G$ be a maximal set of prime ideal classes $[\pp]$ with $N\pp < D^{1/4}/2$ having no solutions to $x+y=0$.
  Then
  \begin{align}
      |G| &\leq D^{1/2} (\log D)^{O(1)},\\
      |S| &\geq c D^{1/4} / \log D.
  \end{align}
  Indeed, for each rational prime $p < D^{1/4}/2$ which splits (but does not ramify) in $K$, we may add exactly one of its prime factors $(p) = \pp \bar \pp$ to $S$,
  and we claim that different primes $p$ contribute different classes.
  Indeed, if $p_1, p_2 < D^{1/4}$ and $\pp_1 \mid p_1$, $\pp_2 \mid p_2$ and $\pp_1 \sim \pp_2$, then $\pp_1 \bar \pp_2$ is principal, say $(a + b\sqrt{-D})$, and of norm less than $D^{1/2}/4$, so $b = 0$, so $\pp_1 \bar \pp_2 = (a)$.
  Comparing norms, we have $p_1 \mid a$, hence $\pp_1 \bar \pp_1 = (p_1) \mid (a)$, and by unique factorization we deduce that $\pp_1 = \pp_2$.  

  By much the same argument we claim $S$ is Sidon.
  Suppose $\pp_i \in S$ ($1 \leq i \leq 4$) satisfy $\pp_i \mid p_i$ and $\pp_1 \pp_2 \sim \pp_3 \pp_4$.
  Then as above, $\pp_1 \pp_2 \bar \pp_3 \bar \pp_4 = (a)$ for some $a \in \Z$.
  Taking norms, we deduce that $p_i | a$ for each $i$ and hence $\pp_i \bar \pp_i | (a)$ for each $i$.
  By unique factorization, it follows that $\pp_1,\pp_2,\bar \pp_3, \bar \pp_4$ can be arranged into two conjugate pairs.
  But since $\pp \in S \Rightarrow \bar \pp \notin S$ by construction, this implies $\{ \pp_1, \pp_2 \} = \{ \pp_3, \pp_4 \}$.
\end{construction}


We give a variation of \Cref{construction:log-primes} that is also very similar to \Cref{construction:imag-quadratic}.

\begin{construction}%
  \label{construction:real-quadratic}
  Set $K=\Q(\sqrt{D})$.
  Suppose also that $K$ has class number $1$.\footnote{It is open to show that there are infinitely many such $K$, even on GRH, but in practice this should occur a positive fraction of the time.}
  Let $u \in \cO_K^\times$ be a fundamental unit, and write $r=\log |u| >0$ for the regulator.  Let $M=\lceil r \rceil$.

  Define $S \subset \Z/M\Z$ as follows: for each prime $p$, $1<p \le D^{1/4}/10$ that splits in $K$, factor $p = \pp \bar \pp$ where $\pp =a+b\sqrt{D}$ and $\bar \pp$ denotes the Galois conjugate $a-b\sqrt{D}$.  Then add the element $\lfloor (M/r) \log |\pp/\bar \pp| \rfloor \bmod M$ to $S$.  (Note this definition is unaffected if we change $\pp$ by a unit.)

  We claim that different $p$ give different elements of $S$, as in Section~\ref{construction:imag-quadratic}.
  Note that if $x=a+b \sqrt{D} \in \cO_K$ then either $b=0$ or $|x/\bar x|> D/4N(x)$ or $|x/\bar x| < 4N(x)/D$: indeed, if $a,b>0$ then $|x| > \sqrt{D}/2$ and $|x / \bar x| = |x|^2/N(x)$, and the other cases are analogous.
  Hence, if $1 < p_1, p_2 \le D^{1/4}/10$ and $\bigl|\log |\pp_1 / \bar \pp_1| - \log |\pp_2 / \bar \pp_2| \bigr| < 1$ we set $x=\pp_1 \bar \pp_2$ and obtain a contradiction unless $x \in \Z$, in which case $\pp_1 = \pp_2$ by unique factorization.

  The proof that $S$ is Sidon is by extending this argument in exactly the same way as in Section~\ref{construction:imag-quadratic}, and we omit the details.

  On GRH\footnote{More precisely, the class number formula relates $r\,|\Cl(K)|$ to the residue of $\zeta_K(s)$ at $1$, which is controlled by GRH, and we have assumed $|\Cl(K)|=1$.}, we have $r \le D^{1/2} (\log D)^{O(1)}$, so $S$ is again fairly dense.
\end{construction}

\subsection{A common generalization}%
\label{sub:unified}

We now observe that \Crefrange{construction:log-primes}{construction:real-quadratic} can be placed into a common framework using (essentially) the notion of Hecke characters.

Let $L$ be a number field with integers $\cO_L$.  Write $\sigma_1,\dots,\sigma_r \colon L \to \R$ and $\tau_1,\dots,\tau_s \colon L \to \C$ for its real and complex embeddings, up to isomorphism.
Let $\ell : L^\times \to (\R^\times)^r \times (\C^\times)^s$ be the homomorphism defined by
\[
    \ell(x) = (\sigma_1(x), \dots, \sigma_r(x), \tau_1(x), \dots, \tau_s(x)).
\]
Let $\mm \subset \cO_L$ be an ideal.  Let $I_{\mm}$ denote the abelian group of fractional ideals of $L$ coprime to $\mm$, and for some parameter $R$ let
\[
  \cP_R = \big\{ \pp \in I_{\mm} : \pp \text{ prime},\ N \pp  \le R \big\}.
\]
For a metric abelian group $H$, a group homomorphism $\phi \colon I_{\mm} \to H$ is termed \emph{admissible}\footnote{This condition is natural in the setting of $L$-functions or class field theory.  Its appearance here is motivated only by the previous examples.} if there is a continuous homomorphism
$\psi \colon (\R^\times)^r \times (\C^\times)^s \to H$
such that $\phi\big( (x) \big) = \psi(\ell(x))$ whenever $x \in L^\times$, $x \equiv 1 \pmod{\mm}$, and $\sigma_i(x)>0$ for all $i \in [r]$.
Finally, let $\Lambda \subseteq H$ be a lattice (discrete co-compact subgroup)
and write $[ x ]$ for the nearest point in $\Lambda$ to $x \in H$,
resolving ambiguity in some arbitrary way.
Let
\[
  S = \{ [\phi(\pp)] : \pp \in \cP_R \} \subseteq \Lambda.
\]
If necessary, discard elements from $S$ so that it contains no pair $\{x, -x\}$.
Then $S$ is a Sidon if we can show, for a particular choice of $\mm, R, H, \Lambda$, that
\begin{enumerate}[(i)]
\item $[\phi(\pp_1)] + [\phi(\pp_2)] = [\phi(\pp_3)] + [\phi(\pp_4)]
    \implies
    \phi(\pp_1) + \phi(\pp_2) = \phi(\pp_3) + \phi(\pp_4)$,
\item $\pp_1 \pp_2 \equiv \pp_3 \pp_4 \pmod {\ker \phi}
    \implies \pp_1 \pp_2 = \pp_3 \pp_4$
\end{enumerate}
for all $\pp_1, \pp_2, \pp_3, \pp_4 \in \cP_R$.

Then \Crefrange{construction:log-primes}{construction:real-quadratic} are the following special cases (with minor modifications):
\begin{itemize}
    \item in \Cref{construction:log-primes}, $L=\Q$, $\mm=(1)$, $H = \R$, $\phi( (t) )= \log |t|$, and $\Lambda = 1/(3 R^2) \Z$;
  \item in \Cref{construction:quotient-ring}, $L=\Q$, $\mm=(m)$, $H = \Lambda = (\Z/m\Z)^\times$, $R=m^{1/2}$, and $\phi((x)) = |x| \bmod m$;
    \item in \Cref{construction:gaussian}, $L=\Q(i)$, $\mm=(1)$, $H=\R/\Z$, $\phi((z)) = \arg(z^4) / 2\pi$, and $\Lambda = (1/16 R^2) \Z/\Z$;
    \item in \Cref{construction:imag-quadratic}, $L = \Q(\sqrt{-D})$, $\mm=(1)$, $H = \Lambda = \Cl(L)$, and $\phi$ the quotient map $I_{1} \to \Cl(L)$.
    \item in \Cref{construction:real-quadratic}, $L=\Q(\sqrt{D})$, $\mm=(1)$, $H = \R/r\Z$, $\Lambda = (r/M) \Z/r\Z$ and $\phi$ is the map $(t) \mapsto \log|t / \bar t|$.
\end{itemize}
Here a few ``hybrid'' examples:
\begin{itemize}
    \item Let $L = \Q$, $\mm = (m)$, $H = \R \times (\Z/m\Z)^\times$, and $\phi((x)) = (\log |x|, x \bmod m)$ and $\Lambda = (m/5R^2) \Z \times (\Z/m\Z)^\times$.
      This gives a dense Sidon set by combining the arguments in \Cref{construction:log-primes} and \Cref{construction:quotient-ring}.
    \item Let $L = \Q(\sqrt{-D})$, $\mm=(m)$ for some positive integer $m$, and $H = I_\mm / P_\mm$ (the \emph{ray class group}), where
    $P_{\mm} = \{ (x) : x \in L^\times,\ x \bmod \mm = 1 \}$.
    Let $\phi : I_\mm \to H$ be the quotient map and let $R = D^{1/4} m^{1/2} / 2$.
    This gives a dense Sidon set (on GRH) by combining and extending the observations in \Cref{construction:quotient-ring} and \Cref{construction:imag-quadratic}.
    (Crucially, when writing $\pp_1 \pp_2 \bar \pp_3 \bar \pp_4 = (a + b \sqrt{-D})$, now $m \mid b$, which given $N \pp_i \le R$ forces $b=0$.)
  \item In \Cref{construction:real-quadratic}, we can similarly augment $H$ to $\R/r\Z \times \Cl(K)$ and remove the inconvenient requirement that $|\Cl(K)|=1$. 
\end{itemize}

There is a standard correspondence between Hecke characters and characters on the id\`ele class group,
so it would be equivalent to phrase this construction in terms of id\`eles.

\subsection{Other examples}

We finish by mentioning some further examples of somewhat dense Sidon sets that do not fit the pattern stated above.
A precise classification in general seems hopeless for now, though there are some suggestive analogies.

\begin{construction}
  Let $K$ be a finite field, $\chr K > 3$, let $G = K^2$, and let
  \[
    S = \{(x, x^3) : x \in U\} \subseteq \FF_p^2
  \]
  where $U \subset K$ is some subset.
  One can show that $S$ is a Sidon set if and only if $U$ has at most one solution to $x+y=0$, as in \Cref{construction:imag-quadratic}.
  The largest $S$ can be in this construction is therefore $(q+1)/2$, where $|G| = q^2$.
\end{construction}



\begin{construction}%
  \label{construction:jacobian}
  Recently Forey and Kowalski found a construction involving Jacobian varieties~\cite{FK}.
  Let $K$ be a finite field and $C$ a (hyperelliptic) curve of genus two with a $K$-rational point.
  There is a natural map $\iota$ from $C$ to its Jacobian variety $G$, which is a finite abelian group.
  It can be shown that $S = \iota(C(K))$ is a Sidon set up to a factor of two, similarly to the previous example.
  Moreover, it follows from Weil's proof of the Riemann hypothesis over finite fields that $|S| \sim |G|^{1/2}$,
  so we get a Sidon set of size $\sim |G|^{1/2}/2$ in $A$.
\end{construction}

This is spiritually related to \Cref{construction:imag-quadratic} or \Cref{sub:unified}, under the ``arithmetic geometry'' analogy between ideal/id\`ele class groups and divisor class groups.

%
\begin{construction}
  As noted by Gowers~\cite{gowers}, if $S$ is a Sidon subset of $\{1, \dots, n\}$
  then $S' = \{5s + \eps(s) : s \in S\}$ is a Sidon subset of $\{1, \dots, 5n+1\}$,
  where $\eps : S \to \{-1, 0, 1\}$ is arbitrary.
  Thus, if $S$ has some algebraic structure, $S'$ will have somewhat less, and its density will be smaller only by a constant factor.
  As noted by Ben Green, one could also use an irrational multiplier, such as $s \mapsto \floor{s \sqrt{2}}$.
\end{construction}

\appendix
\section{Proof of Proposition~\ref{prop:semilinear-are-bad}}%
\label{app:semilinear-bad}

Recall we wish to show that if $K = \F_q$ and $H \le \PGammaL_3(K)$ is an abelian subgroup not contained in $\PGL_3(K)$ then $|H| \ll q$.

Let $G$ be the subgroup of $\GammaL_3(K) = \GL_3(K) \rtimes \Gal(K)$ upstairs corresponding to $H$. Hence $Z \le G$, $G$ is not contained in $\GL_3(K)$, $G' \leq Z$, and $|G| = (q-1) |H|$. We wish to show that $|G| \ll q^2$.

Let $G_0 = G \cap \GL_3(K)$.  Fix an element $h = g \sigma \in G$ such that $\sigma \in \Gal(K)$ generates the (nontrivial) image of $G$ in the cyclic group $\Gal(K)$. Let $k \subset K$ denote the fixed field of $\sigma$, and write $d = [K : k]$ for the degree of $K$ over $k$; equivalently, $d$ is the order of $\sigma$ in $\Gal(K)$.  Hence, $|G| = d\, |G_0|$.

If $G_0$ is nonabelian, it is a subgroup of either \emph{(viii)} or \emph{(ix)} in Proposition~\ref{prop:maximal-abelian}, so $|G_0| \ll q$, $|G| \ll d q \le q \log_p q$ and we are done.  We now assume that $G_0$ is abelian.

Consider the function $\lambda \colon G_0 \to K^\times$ defined by $[h, a] = hah^{-1}a^{-1} = \lambda(a) I$.  We observe it is a group homomorphism.  Moreover,
\[
  \lambda(a) a = h a h^{-1} = g \sigma(a) g^{-1}
\]
and hence
\[
  \lambda(a)^3\, (\det a) = \det \sigma(a) = \sigma(\det a)
\]
so $\lambda(a)^3$ has the form $\sigma(t)/t$ for some $t \in K^\times$, hence $N_{K/k}(\lambda(a)^3) = 1$.  There are $|K^\times|/|k^\times|$ elements $u \in K^{\times}$ with $N_{K/k}(u)=1$, so $|\lambda(G_0)| \le 3 |K^{\times}|/|k^{\times}|$.

Let $G_{00} = \ker \lambda \le G_0$.  By definition, for all $a \in G_{00}$ we have
\begin{equation}%
  \label{eq:galois-action}
  \sigma(a) = g^{-1} a g.
\end{equation}
Let $A_k = \spn_k(G_{00}) \le M_3(K)$ and $A_K = \spn_K(G_{00}) \le M_3(K)$.  Note $A_K$ is a commutative $K$-subalgebra of $M_3(K)$, and by a theorem of Schur~\cites{schur,mirzahkani}%
\footnote{Alternatively, in dimension $3$, this could be extracted from Proposition~\ref{prop:maximal-abelian} or its proof.}
any commutative $K$-subalgebra of $M_3(K)$ has $K$-dimension at most $3$, so $\dim_K(A_K) \le 3$.

We claim $\dim_k(A_k) \le 3$.  Indeed, any $a_1,a_2,a_3,a_4 \in A_k$ must be linearly dependent over $K$ (as they lie in $A_K$): say $\sum_{i=1}^4 t_i a_i = 0$ for some $t_i \in K$, not all zero.  Applying~\eqref{eq:galois-action},
\[
  \sum_{i=1}^4 \sigma(t_i)\, a_i = 0
\]
and by iterating this and summing, we obtain
\[
  \sum_{i=1}^4 \tr_{K/k}(t_i)\, a_i = 0.
\]
Finally, we may apply this replacing $t_i$ with $u t_i$ for any $u \in K$ throughout, and $u$ may be chosen so that some $\tr_{K/k}(u t_i)$ is non-zero.  Hence $a_1,\dots,a_4$ are necessarily linearly dependent over $k$, so $\dim_k(A_k) \le 3$ as claimed.

It follows that $|G_{00}| \le |k|^3 - 1$ (as $0 \in A_k$).  Putting everything together, we deduce
\[
  |G| \le 3 d \frac{|K^\times|}{|k^\times|} (|k|^3 - 1) = 3 d (q-1) (q^{2/d} + q^{1/d} + 1).
\]
When $d=2$ this is $O(q^2)$ as claimed,%
\footnote{In this case the above bound can be sharp, at least up to the factor of $3$. For example, suppose $q = p^2$, $h$ is $\frob_{p^3}$, and $G_0 = K^\times \F_{p^3}^\times$.}
and for $3 \le d \le \log_p q$ this implies an even stronger bound.

\bibliography{refs}
\bibliographystyle{alpha}
\end{document}